\newtheorem{theorem}{Theorem}[section]
\newtheorem{corollary}[theorem]{Corollary}
\newtheorem{lemma}[theorem]{Lemma}
\newtheorem{conjecture}[theorem]{Conjecture}
\newtheorem{proposition}[theorem]{Proposition}
\newtheorem{fact}[theorem]{Fact}
\theoremstyle{definition}
\newtheorem{definition}[theorem]{Definition}
\newtheorem{remark}[theorem]{Remark}
\numberwithin{equation}{section}
\begin{document}


\baselineskip=17pt


\title{Ill-distributed sets over global fields and exceptional sets in Diophantine Geometry}

\author{Marcelo Paredes}
\address{
Instituto Argentino de Matem\'aticas-CONICET\\
Saavedra 15, Piso 3 (1083), Buenos Aires, Argentina}
\address{
and}
\address{Departamento de Matem\'aticas, Facultad de Ciencias Exactas y Naturales\\
Universidad de Buenos Aires, 1428\\
Buenos Aires\\
Argentina}
\email{mparedes@dm.uba.ar}

\subjclass[2020]{11G50, 11G99, 11U09}

\keywords{O-minimal structures; ill-distributed sets at the level of residue classes; global fields; Wilkie conjecture}

\begin{abstract}	
Let $K\subseteq \mathbb{R}$ be a number field. Using techniques of discrete analysis, we prove that for definable sets $X$ in $\mathbb{R}_{\exp}$ of dimension at most $2$ a conjecture of Wilkie about the density of rational points is equivalent to the fact that $X$ is badly distributed at the level of residue classes for many primes of $K$. This provides a new strategy to prove this conjecture of Wilkie. In order to prove this result, we are lead to study an inverse problem as in the works \cite{Walsh2, Walsh}, but in the context of number fields, or more generally global fields. Specifically, we prove that if $K$ is a global field, then every subset $S\subseteq \mathbb{P}^{n}(K)$ consisting of rational points of projective height bounded by $N$, occupying few residue classes modulo $\mathfrak{p}$ for many primes $\mathfrak{p}$ of $K$, must essentially lie in the solution set of a polynomial equation of degree $\lesssim (\log(N))^{C}$, for some constant $C$. 
\end{abstract}

\maketitle

\section{Introduction}

For $K$ a number field, and $X\subseteq \mathbb{R}^{n}$, let $X(K)$ denote the subset of points ${\bf x}=(x_{1},\ldots ,x_{n})\in X$  with $x_{i}\in K$ for all $i$. Given $x\in K$, let $H(x)$ be the affine height of an algebraic number. For $N\geq 1$, set
\begin{center}
$X(K,N):=\{{\bf x}=(x_{1},\ldots ,x_{n})\in X(K):H(x_{i})\leq N\; \forall i\}$.
\end{center}
A fundamental problem in Diophantine Geometry and Transcendental Number Theory is to obtain bounds for $X(K,N)$ when $X$ is a non-algebraic set. When $X$ is the graph of $f:[0,1]\rightarrow \mathbb{R}$, a transcendental real-analytic function, in \cite[Theorem 9]{Pila5} Pila  proves that for any $\varepsilon>0$ there exists a positive constant $c=c(X,\varepsilon)$ such that
\begin{equation}
|X(\mathbb{Q},N)|\leq cN^{\varepsilon}.
\label{Pila5}
\end{equation}
In order to generalize \eqref{Pila5} to sets of higher dimensions, Pila and Wilkie in \cite{Pila2} deal with the transcendental part of a set $X\subseteq \mathbb{R}^{n}$ definable in an o-minimal structure. Recall that the algebraic part of a set $X\subseteq \mathbb{R}^{n}$, which we denote $X^{\text{alg}}$, consists of the points ${\bf x}\in X$ such that there exists a connected, semialgebraic set $Y\subseteq X$ of positive dimension with ${\bf x}\in Y$. The transcendental part of $X$, denoted $X^{\text{trans}}$, is defined as $X^{\text{trans}}:=X\backslash X^{\text{alg}}$. Pila and Wilkie then prove the following generalization of \eqref{Pila5}.
\begin{theorem}[{\cite[Theorem 1.8]{Pila2}}]
Let $X\subseteq \mathbb{R}^{n}$ be a set, definable in an o-minimal structure, and let $\varepsilon>0$. There is a positive constant $c=c(X,\varepsilon)$ such that for all $N\geq 1$ we have
\begin{equation}
|X^{{\rm trans}}(\mathbb{Q},N)|\leq cN^{\varepsilon}.
\label{pil}
\end{equation}
\label{pilawilkietheorem}
\end{theorem}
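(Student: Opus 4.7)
My plan is to follow the Bombieri--Pila determinant strategy adapted to the o-minimal setting. First I would reduce, by definable cell decomposition applied to $X$, to the case where $X$ is a bounded definable set of pure dimension $d$, and then prove the bound by induction on $d$: the base case $d=0$ is trivial since $X$ is finite, and for higher $d$ the goal is to exhibit enough algebraic hypersurfaces capturing all but $O_\varepsilon(T^\varepsilon)$ transcendental points.

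The technical core is an \emph{o-minimal parametrization theorem}: for every integer $r\geq 1$, a bounded definable set $X\subseteq \mathbb{R}^n$ of dimension $d$ admits a finite covering $X=\bigcup_{j=1}^{N}\phi_j((0,1)^d)$ by definable $C^r$-maps $\phi_j:(0,1)^d\to X$ whose partial derivatives of orders $\leq r$ are bounded in norm by $1$. Establishing this in an arbitrary o-minimal structure, by combining definable cell decomposition with a reparametrization step that dampens derivatives, generalizes the semi-algebraic result of Gromov and Yomdin; this is where the main effort lies.

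Given $\varepsilon>0$, I would then fix $r$ large enough and apply the determinant-method counting lemma on each piece $\phi_j((0,1)^d)$: cover $(0,1)^d$ by subcubes of a carefully tuned size, expand $\phi_j$ in Taylor series on each subcube, and form the determinant whose columns are the monomials of degree $\leq D = D(d,\varepsilon)$ evaluated at the rational points under consideration. A size-versus-integrality argument forces this determinant to vanish whenever more than $C T^\varepsilon$ rational points of height $\leq T$ lie in one piece, so those points must be contained in some real algebraic hypersurface of degree $\leq D$.

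Consequently $X(\mathbb{Q},T)\setminus X^{\text{alg}}(\mathbb{Q},T)$ is contained in at most $O_\varepsilon(T^\varepsilon)$ intersections of the form $X\cap Z$ with $\deg Z \leq D$. Any positive-dimensional semi-algebraic component of such an intersection is by definition contained in $X^{\text{alg}}$, so the surviving transcendental points lie in a definable set of dimension strictly less than $d$, to which the inductive hypothesis applies; summing the resulting $cT^\varepsilon$ bounds yields the theorem. The main obstacle I would expect is the o-minimal parametrization theorem itself, because it is precisely the place where the generic flexibility of o-minimal structures must be tamed into a quantitative, Yomdin-style statement; by contrast, the determinant step and the dimension induction amount to essentially bookkeeping once a good parametrization is in hand.
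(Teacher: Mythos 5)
The paper does not prove Theorem \ref{pilawilkietheorem}; it is quoted as \cite[Theorem 1.8]{Pila2}, and the surrounding text only remarks that Wilkie later gave an alternative proof substituting a Thue--Siegel lemma for the determinant method, with both proofs resting on the o-minimal parametrization theorem. Your sketch faithfully reproduces the original Pila--Wilkie argument --- cell decomposition, the Yomdin--Gromov-type $C^r$-parametrization of definable sets, the Bombieri--Pila determinant step producing $O_\varepsilon(T^\varepsilon)$ hypersurfaces of bounded degree, and induction on dimension --- and you correctly identify the parametrization theorem as the genuine technical obstacle. One point worth flagging: to close your induction you apply the hypothesis to the $O_\varepsilon(T^\varepsilon)$ sets $X\cap Z$ as $Z$ varies, so the constant must be uniform over that algebraic family; in \cite{Pila2} this is handled by proving the whole statement for definable \emph{families} rather than a single set, and your sketch should make that uniformity explicit rather than treating the inductive step as mere bookkeeping.
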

Theorem \ref{pilawilkietheorem} was later generalized by Pila in \cite{Pila}. From the results in \cite{Pila}, it follows that the same type of bound \eqref{pil} holds for number fields, but with the constant $c$ dependent on the (degree of the) field.

In general the bound in Theorem \ref{pilawilkietheorem}  is best possible, but if we consider some specific o-minimal structures, it is conjectured that the bound can be improved. This is the content of Wilkie conjecture:
\begin{conjecture}[Wilkie conjecture, {\cite[Conjecture 1.11]{Pila2}}]
Suppose that $X\subseteq \mathbb{R}^{n}$ is a set definable in the o-minimal structure $\mathbb{R}_{{\rm exp}}$. For any number field $K\subseteq \mathbb{R}$ of degree $d$, there exist positive constants $c_{1}=c_{1}(X,d)$, $c_{2}=c_{2}(X)$ such that
\begin{equation}
\left\vert X^{{\rm trans}}(K,N) \right\vert\leq c_{1}\left(\log(N)\right)^{c_{2}}
\end{equation}
for all $N>e$.
\label{wilkie}
\end{conjecture}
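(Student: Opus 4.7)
Wilkie's conjecture is a longstanding open problem, so any proposal must be tentative; the strategy I would adopt, suggested by the abstract, is to reduce the counting problem to an ill-distribution phenomenon modulo primes of $K$, and then apply an inverse theorem for sets occupying few residue classes. The plan has three parts: (i) show that an excess of rational points on $X^{\text{trans}}$ forces the image of $X^{\text{trans}}(K,N)$ to occupy unexpectedly few residue classes modulo $\mathfrak{p}$ for many primes $\mathfrak{p}$ of $K$; (ii) invoke an inverse theorem of the Walsh type, but over the global field $K$, asserting that such an ill-distributed set of bounded projective height must, up to a small exceptional subset, lie in the zero locus of a polynomial of degree $\ll (\log N)^C$; (iii) use the transcendence of $X$ to forbid too many of its rational points from lying on such a low-degree hypersurface.

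For step (i), I would exploit the fact that $X$ is cut out by polynomial and exponential conditions. Reducing a rational point $P=(x_1,\ldots,x_n)\in X(K,N)$ modulo a good prime $\mathfrak{p}$, the defining conditions translate into multiplicative constraints on the residues, pushing the image of $X(K,N)$ into a small subset of $(\mathcal{O}_K/\mathfrak{p})^n$ of size comparable to the ``model-theoretic dimension'' of $X$. If $|X^{\text{trans}}(K,N)|$ were much larger than $c_1 (\log N)^{c_2}$, the pigeonhole mismatch across a positive-density family of small primes would produce the required ill-distribution.

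For step (ii) I would apply the inverse theorem for global fields described in the abstract: projectivizing $X^{\text{trans}}(K,N)$ and translating the ill-distribution output of step (i) into its hypotheses, one concludes that most of the set lies on an algebraic hypersurface $V\subseteq\mathbb{P}^n$ of degree $D\ll (\log N)^C$. For step (iii), I would combine Theorem \ref{pilawilkietheorem} with a block-decomposition argument: $X^{\text{trans}}\cap V$ still has a transcendental part whose rational points of height $\leq N$ are $O_\varepsilon(T^\varepsilon)$, while the algebraic part $X^{\text{alg}}\cap V$ is controlled by the bounded degree of $V$ and the fact that $V$ cannot contain a positive-dimensional semialgebraic piece of $X^{\text{trans}}$.

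The hardest step by far is (i): controlling how $\exp$ behaves modulo primes of $K$ is subtle, because it mixes additive and multiplicative structures in a way that depends on the residue characteristic and on multiplicative orders. This is precisely why the abstract restricts the equivalence to $\dim X\leq 2$, where the residue-class combinatorics can be analyzed by hand, and why one should not expect a direct attack on higher dimensions without substantial new input. A secondary technical point is arithmetic bookkeeping in step (iii): one must verify that the exceptional set produced by the inverse theorem, combined with the exponents coming from Pila--Wilkie on the degree-$D$ hypersurface, still totals at most $c_1(\log N)^{c_2}$ rather than a worse power of $\log N$.
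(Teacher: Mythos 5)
The statement you were asked to prove is a \emph{conjecture}, and the paper does not claim a proof of it. What the paper actually establishes (Theorem \ref{evidence}) is an \emph{equivalence}: for a set $X$ definable in $\tilde{\mathbb{R}}$ of dimension at most $2$, Conjecture \ref{wilkiepf} holds if and only if the ill-distribution Conjecture \ref{wilkieill2} holds. Your step (i) — deducing ill-distribution of $X^{\text{trans}}(K,N)$ modulo many primes from the structure of the definable conditions — is precisely the open part; the paper offers no argument for it and explicitly poses it as a conjecture (Conjecture \ref{wilkieill}/\ref{wilkieill2}). You correctly flag step (i) as the hardest, but the framing that it is ``analyzed by hand'' in dimension $\leq 2$ is not what the paper does. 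The dimension restriction has nothing to do with residue-class combinatorics.

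There are also two substantive problems in your step (iii). First, after the Walsh-type inverse theorem produces a hypersurface $V(Q)$ of degree $D \ll (\log N)^{\kappa/(n-\kappa)}$ containing half of $X^{\text{trans}}(K,N)$, you propose to bound $|X^{\text{trans}}(K,N)\cap V(Q)|$ using Theorem \ref{pilawilkietheorem}, i.e.\ the Pila--Wilkie $O_\varepsilon(T^\varepsilon)$ bound. That bound is far too weak: it gives $T^\varepsilon$, which dominates any polylogarithmic bound and so cannot close the argument. What the paper uses instead are the Pfaffian intersection estimates of Jones--Thomas (\cite[Lemma 3.3]{Jones} for $i=2$, \cite[Proposition 5.3]{Jones} for $i=3$), which give $|(X\cap V(P))^{\text{trans}}(K,N)| \leq c_1 d^{c_2}(\log N)^{c_3}$ for a degree-$d$ polynomial $P$ — a genuinely polylogarithmic bound once $d$ is polylogarithmic. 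It is exactly the availability of these Pfaffian bounds (not residue-class considerations) that forces the restriction $\dim X\leq 2$. Second, the observation that $X^{\text{trans}}(K,N)\cap V(Q)\subseteq (X\cap V(Q))^{\text{trans}}(K,N)$ — the containment (\ref{bound2}) in the paper — is what makes the Jones--Thomas bounds applicable; your sketch instead tries to split off an ``algebraic part $X^{\text{alg}}\cap V$,'' which is not the decomposition the paper uses and would need additional justification.

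In summary: your high-level architecture — ill-distribution $\Rightarrow$ low-degree containing hypersurface $\Rightarrow$ smallness — matches the paper's proof of the implication (Conjecture \ref{wilkieill2} $\Rightarrow$ Conjecture \ref{wilkiepf}), but (a) the stated result is only a conjectural equivalence, not a theorem, so there is nothing here to compare your step (i) against, and (b) your step (iii) uses the wrong density estimate, which would not yield the required $(\log N)^{c_2}$ bound. You should replace the appeal to Theorem \ref{pilawilkietheorem} with the Jones--Thomas Pfaffian intersection bounds and recognize that this, not step (i), is where the dimension restriction enters.
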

Let us note that Conjecture \ref{wilkie} has consequences in Transcendental Number Theory. For instance, let $\alpha\in \mathbb{R}\backslash \mathbb{Q}$, and consider the set 
$$X_{\alpha}:=\{(x,y)\in \mathbb{R}^{2}:y=x^{\alpha}\}.$$ 
It can be seen that $X_{\alpha}$ is definable in $\mathbb{R}_{\exp}$, and that it does not contain semialgebraic subsets of positive dimension. Let us suppose that $X_{\alpha}$ verifies Conjecture \ref{wilkie} for all number field $K$, with constant $c_{2}$. Using the group structure of $X_{\alpha}$, it is not hard to prove (see \cite[Page 493]{Pila4}) that given $\left\lceil c_{2}\right\rceil+1$ real numbers $w_{1},\ldots ,w_{\left\lceil c_{2}\right\rceil+1}$, linearly independent over $\mathbb{Q}$, then at least one of the $2(\left\lceil c_{2}\right\rceil+1)$ exponentials $\exp (w_{1}),\ldots ,\exp(w_{\left\lceil c_{2}\right\rceil+1})$, $\exp(\alpha w_{1}),\ldots ,\exp(\alpha w_{\left\lceil c_{2}\right\rceil+1})$ must be transcendental. 

In fact, the Six Exponentials Theorem states that if there are just $3$ linearly independent $w_{i}$, then at least one of the six exponentials $\exp(w_{i})$, $\exp(\alpha w_{i})$, $i=1,2,3$, is transcendental. Moreover, it is conjectured that if we change $3$ by $2$, the same result holds; this is the Four Exponentials Conjecture. Thus, if the set $X_{\alpha}$ verifies Conjecture \ref{wilkie} with constant $c_{2}=1$, the Four Exponentials Conjecture would follow. For more about this, see also \cite[Theorem 9]{Butler2}.

One may ask if the bound in Conjecture \ref{wilkie} holds for other o-minimal structures. For instance, we have the following natural generalization of Conjecture \ref{wilkie}, which appears in \cite{Jones}.
\begin{conjecture}
Let $f_{1},\ldots ,f_{r}$ be a Pfaffian chain and suppose that  $\tilde{\mathbb{R}}=(\mathbb{R},<,+,\cdot,f_{1},\ldots ,f_{r})$ is a model complete expansion of the real field. Suppose that $X\subseteq \mathbb{R}^{n}$ is a set definable in the o-minimal structure $\tilde{\mathbb{R}}$. For any number field $K\subseteq \mathbb{R}$, there exist positive constants $c_{1}=c_{1}(X,K)$, $c_{2}=c_{2}(X)$ such that
\begin{equation}
\left\vert X^{{\rm trans}}(K,N) \right\vert\leq c_{1}\left(\log(N)\right)^{c_{2}}
\end{equation}
for all $N>e$.
\label{wilkiepf}
\end{conjecture}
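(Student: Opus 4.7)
The plan is to follow the strategy suggested by the equivalence in the $\mathbb{R}_{\exp}$, dimension-at-most-two case announced in the paper: reduce Conjecture \ref{wilkiepf} to an ill-distribution statement at the level of residue classes, and then invoke an inverse theorem over global fields to convert ill-distribution back into algebraicity. Fix a Pfaffian definable $X\subseteq\mathbb{R}^n$ and a number field $K\subseteq\mathbb{R}$. The two ingredients to assemble are: (a) an inverse theorem over $K$ asserting that any $S\subseteq\mathbb{P}^n(K)$ of projective height at most $N$ which occupies few residue classes modulo $\mathfrak{p}$ for many primes $\mathfrak{p}$ must essentially lie in the zero locus of a polynomial of degree $(\log N)^{O(1)}$; and (b) a bound, specific to the Pfaffian setting, on the number of residue classes occupied modulo $\mathfrak{p}$ by the reduction of $X^{\text{trans}}(K,N)$.

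The first ingredient is precisely the main arithmetic result promised by the paper, so I would take it as a black box. For (b), I would exploit the quantitative aspects of Pfaffian geometry: effective parametrization theorems in the spirit of Gabrielov--Vorobjov and their sharp polylogarithmic form due to Binyamini--Novikov supply a covering of $X$ by $C^r$-charts whose format is polynomial in the Pfaffian data of the chain $f_1,\ldots,f_r$. Reducing these charts modulo a prime $\mathfrak{p}$ of good reduction and counting points of height at most $N$ which can survive on each chart should yield a polylogarithmic bound on the number of residue classes touched, away from a sparse exceptional set of primes determined by the denominators appearing in the parametrization.

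With (a) and (b) in hand, one runs a dichotomy: either $|X^{\text{trans}}(K,N)|\leq c_{1}(\log N)^{c_{2}}$ and we are done, or $X^{\text{trans}}(K,N)$ has enough points to trigger the hypothesis of (a), producing an algebraic hypersurface $Z$ of degree $(\log N)^{O(1)}$ containing a positive proportion of them. A standard o-minimality argument then forces $\dim(X\cap Z)<\dim X$ on the transcendental part, since $X^{\text{trans}}$ cannot meet a positive-dimensional semialgebraic subset of $X$. Replacing $X$ by $X\cap Z$ and iterating, the polylogarithmic degree of $Z$ contributes only a polylogarithmic loss at each step, and after at most $\dim(X)$ iterations the bound of Conjecture \ref{wilkiepf} emerges.

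The hard part will be step (b): controlling the distribution of $X^{\text{trans}}(K,N)$ modulo primes with Pfaffian-explicit constants. The parametrization responsible for the Pila--Wilkie $O_\varepsilon(N^\varepsilon)$ bound is far too crude; one needs genuinely polylogarithmic complexity bounds for the reductions, uniform in $\mathfrak{p}$, together with a careful treatment of primes of bad reduction and of the dependence of the implicit constants on the Pfaffian format rather than on the arithmetic of $K$. The paper's restriction to $\dim X\leq 2$ in the $\mathbb{R}_{\exp}$ case reflects exactly this difficulty, and indicates that pushing the argument to arbitrary dimension in the Pfaffian setting will require substantial new input on both the analytic side (sharper effective parametrizations) and the arithmetic side (an inverse theorem robust enough to tolerate the denominators introduced by Pfaffian charts).
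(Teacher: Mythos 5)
The statement you are attempting to prove is Conjecture~\ref{wilkiepf}, which remains \emph{open}; the paper does not prove it and never claims to. What the paper actually establishes is an \emph{equivalence}: Theorem~\ref{evidence} shows that, for $X$ definable in $\tilde{\mathbb{R}}$ of dimension at most $2$, Conjecture~\ref{wilkiepf} holds for $X$ if and only if the ill-distribution statement (Conjecture~\ref{wilkieill2}) holds for $X$. The paper's contribution is to prove the arithmetic inverse theorem (Theorems~\ref{generalization} and~\ref{affinegeneralization}) that makes one direction of this equivalence possible, thereby reformulating the conjecture --- it does not close the loop. Your proposal recognizes this, but frames the result as a theorem with a gap in step~(b), when in fact step~(b) \emph{is} the conjecture: the paper explicitly introduces it as Ill-distribution Conjecture~B and offers no proof. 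You cannot take ``(b)'' as an ingredient to be verified later and still call the result a theorem.

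A second, more technical issue: your closing dichotomy argument is not the one the paper uses and would not go through as stated. You assert that ``a standard o-minimality argument forces $\dim(X\cap Z)<\dim X$'' and that iterating $\dim(X)$ times finishes. In fact $X\cap V(Q)$ can have the same dimension as $X$, and the transcendental points of $X$ lying on $V(Q)$ are a priori still numerous. What the paper actually invokes is the containment $X^{\text{trans}}(K,N)\cap V(Q)\subseteq (X\cap V(Q))^{\text{trans}}(K,N)$ combined with the quantitative Pfaffian intersection bounds of Jones--Thomas (\cite[Lemma 3.3, Proposition 5.3]{Jones}), which give $|(X\cap V(P))^{\text{trans}}(K,N)|\ll d^{c_2}(\log N)^{c_3}$ for any polynomial $P$ of degree $d$. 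It is precisely the availability of such effective bounds only for $\dim X \le 2$ (via graphs of implicitly definable functions in $\mathbb{R}^2,\mathbb{R}^3$) that forces the paper's dimension restriction; your sketch of ``replacing $X$ by $X\cap Z$ and iterating $\dim(X)$ times'' does not reproduce this and would need the Jones--Thomas input to work. In short: the arithmetic inverse theorem and the reduction strategy you outline do match the paper's approach, but the statement remains a conjecture, and the finishing step requires the specific Pfaffian intersection estimates rather than a generic o-minimality argument.
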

If the dimension of $X$ equals $1$, then Conjecture \ref{wilkie} is known to hold by work of Butler \cite{Butler} and Jones and Thomas \cite{Jones}. If $X$ has dimension greater than $1$, Conjecture \ref{wilkie} is known for the family of surfaces 
\begin{equation}
\left\{(x,y,z)\in (0,\infty)^{3}:(\log(x))^{a}(\log(x))^{b}(\log(z))^{c}=1\right\},\; (a,b,c)\in \mathbb{Q}^{3},
\label{family} 
\end{equation}
by work of Pila \cite{Pila4} when $(a,b,c)=(1,1,-1)$ and Butler \cite{Butler} in the general case. If $X\subseteq \mathbb{R}^{3}$ is definable in an o-minimal structure as in Conjecture \ref{wilkiepf}, then, under the assumption that $X$ has a mild parametrization (see \cite[$\S$ 2]{Pila4}), in \cite{Jones} Jones and Thomas prove that $X$ satisfies Conjecture \ref{wilkiepf}. Furthermore, in \cite[Proposition 2.3.6]{CluckersPila}, Cluckers, Pila and Wilkie prove a parametrization result and then use it to show  that if $\mathcal{X}=\{X_{t}:t\in T\}\subseteq T\times (-1,1)^{3}$ is a family of Pfaffian surfaces definable in the restricted analytic field $\mathbb{R}_{\text{an}}$, then Conjecture \ref{wilkiepf} holds uniformly on the fibres of $\mathcal{X}$, specifically, there are positive constants $c_{1}=c_{1}(\mathcal{X})$, $c_{2}=c_{2}(\mathcal{X})$ such that  
$$\left\vert X_{t}^{{\rm trans}}(\mathbb{Q},N)\right\vert \leq c_{1}\left(\log(N)\right)^{c_{2}}$$
for all $t\in T$ and for all $N>e$. 

More generally, in \cite{Binyamini} Binyamini and Novikov prove that if $X\subseteq \mathbb{R}^{n}$ is a set definable in the ``restricted'' o-minimal structure $\mathbb{R}^{{\rm RE}}:=(\mathbb{R},<,+,\cdot, \exp|_{[0,1]},\sin|_{[0,\pi]})$, then $X$ satisfies the bound in Conjecture \ref{wilkie} (in fact, Binyamini and Novikov prove a stronger bound, see \cite[Theorem 2]{Binyamini}). We remark that the approach of Binyamini and Novikov does not rely on mild parametrizations. 

The purpose of this article is to pose a strategy to prove Conjecture \ref{wilkiepf} which does not use mild parametrizations. More specifically, our result implies the following consequence:

\begin{theorem}
Let $X\subseteq \mathbb{R}^{n}$ be a set definable in $\mathbb{R}_{\exp}$ of dimension at most $2$. Let $K$ be a number field and let $\mathcal{O}_{K}$ be its ring of integers. Then
\begin{align*}
|X^{{\rm trans}}(\mathcal{O}_{K},N)|& =|\{{\bf x} =(x_{1},\ldots ,x_{n})\in X^{{\rm trans}}\cap \mathcal{O}_{K}^{n}:H(x_{i})\leq N\;\forall i\}|\\ & \leq c_{1}(X,K)(\log(N))^{c_{2}(X)},
\end{align*}
for some positive constants $c_{1}(X,K)$, $c_{2}(X)$ if and only if there exist positive constants $\alpha=\alpha_{1}(X,K)$, $\tau=\tau(X,K)$, $\kappa=\kappa(X)$, with $0\leq \kappa<n$, such that for all non-zero primes $\mathfrak{p}\subseteq \mathcal{O}_{K}$ of absolute norm $\mathcal{N}_{K}(\mathfrak{p})\geq \tau(\log(N))^{\frac{n}{n-\kappa}}$, it holds
\begin{align*}
|\{\left(x_{1}\emph{\text{ mod }}(\mathfrak{p}),\ldots ,x_{n}\emph{\text{mod}}(\mathfrak{p})\right) & :(x_{1},\ldots ,x_{n})\in X^{{\rm trans}}\cap \mathcal{O}_{K}^{n},H(x_{i})\leq N\;\forall i\}| \\ & \leq \alpha\mathcal{N}_{K}(\mathfrak{p})^{\kappa}
\end{align*}
for all $N>e$.
\label{app}
\end{theorem}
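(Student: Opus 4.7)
The easy direction ($\Rightarrow$) is a direct counting argument: the number of residue classes occupied by $X^{\text{trans}}(\mathcal{O}_{K},N)$ modulo any prime $\mathfrak{p}$ is at most $|X^{\text{trans}}(\mathcal{O}_{K},N)|$ itself, which under the Wilkie-type hypothesis is at most $c_{1}(\log N)^{c_{2}}$. Choosing $\kappa=c_{2}n/(n+c_{2})$, one checks $0\leq\kappa<n$ and $n/(n-\kappa)=(n+c_{2})/n=c_{2}/\kappa$, so whenever $\mathcal{N}_{K}(\mathfrak{p})\geq\tau(\log N)^{n/(n-\kappa)}$ we obtain $\mathcal{N}_{K}(\mathfrak{p})^{\kappa}\geq\tau^{\kappa}(\log N)^{c_{2}}$. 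This gives the required bound with $\alpha$ proportional to $c_{1}\tau^{-\kappa}$, both depending on $(X,K)$.

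The substantive direction ($\Leftarrow$) goes through the inverse theorem announced in the abstract. Put $S=X^{\text{trans}}\cap\mathcal{O}_{K}^{n}$ with $H(x_{i})\leq N$; the embedding $\mathbf{x}\mapsto[1\!:\!x_{1}\!:\!\cdots\!:\!x_{n}]$ transports $S$ into $\mathbb{P}^{n}(K)$, preserving height up to constants and turning the bad-distribution hypothesis into its projective form for all $\mathfrak{p}$ of norm $\geq\tau(\log N)^{n/(n-\kappa)}$. Feeding this into the paper's inverse theorem produces a nonzero polynomial $P\in K[x_{1},\ldots,x_{n}]$ of degree $D\ll(\log N)^{C}$ such that $S$ lies in the hypersurface $V=\{P=0\}$ outside an exceptional set of polylogarithmic size.

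Next we exploit $\dim X\leq 2$ to reduce to the one-dimensional case. Any component of $X\cap V$ of dimension $2$ would arise from an irreducible component of $V$ meeting $X$ in full dimension, hence would contain a semi-algebraic positive-dimensional subset of $X$ and therefore sit inside $X^{\text{alg}}$; such a component contributes no points to $X^{\text{trans}}\cap V$. So $X^{\text{trans}}\cap V$ is contained in a definable set $Y\subseteq X$ of dimension at most $1$, which by Bézout-type considerations decomposes into $O(D^{n-1})=\operatorname{poly}(\log N)$ irreducible pieces. On each such piece the dimension-$1$ Wilkie conjecture, established by Butler and by Jones--Thomas, supplies a polylogarithmic count of $\mathcal{O}_{K}$-points of height $\leq N$. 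Summing over the pieces and adding the polylog contribution from the exceptional set of the inverse theorem produces the desired bound $|X^{\text{trans}}(\mathcal{O}_{K},N)|\leq c_{1}(\log N)^{c_{2}}$.

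The main obstacle is of course the inverse theorem itself, which constitutes the paper's core work over global fields and must be quantitative enough that the degree $D$ comes out as a power of $\log N$ compatible with the threshold $\tau(\log N)^{n/(n-\kappa)}$. A secondary technical subtlety is the algebraic/transcendental bookkeeping justifying that $X^{\text{trans}}\cap V$ is truly $1$-dimensional in the definable sense; one has to invoke the precise definition of $X^{\text{alg}}$ rather than treat $X$ as a black box, and to keep track of the polynomial number of irreducible pieces uniformly in $N$ in order to conclude that the accumulated count remains polylogarithmic.
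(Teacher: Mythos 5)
The forward direction ($\Rightarrow$) of your proposal is essentially the paper's argument, differing only in how $\kappa$ is parameterised in terms of $c_{2}$; both reparametrisations work, so this part is fine.

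The substantive direction ($\Leftarrow$) contains a genuine gap. You assert that the inverse theorem (Theorem~\ref{affinegeneralization}/Corollary~\ref{generalization2}) produces a polynomial $P$ of degree $D\ll(\log N)^{C}$ such that $S$ lies in $V=\{P=0\}$ ``outside an exceptional set of polylogarithmic size.'' This is not what the inverse theorem gives: it produces a polynomial vanishing on at least $(1-\varepsilon)|S|$ points, so the exceptional set has size $\varepsilon|S|$, a \emph{positive proportion} of $S$. Since $|S|$ is precisely the quantity one is trying to bound (and a priori could be as large as $N^{O(1)}$), the contribution of the exceptional set is not polylogarithmic and your final summation does not close. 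The paper handles this correctly by applying the inverse theorem with $\varepsilon=\tfrac12$ and then chaining
\[
\tfrac12\,|X^{\mathrm{trans}}(K,N)| \le |X^{\mathrm{trans}}(K,N)\cap V(Q)| \le |(X\cap V(Q))^{\mathrm{trans}}(K,N)| \le c_{1}D^{c_{2}}(\log N)^{c_{3}},
\]
which bounds $|X^{\mathrm{trans}}(K,N)|$ directly (after absorbing the factor $2$) without any separate accounting for the exceptional set.

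A second, smaller gap is your handling of the intersection $X\cap V$. You appeal to ``B\'ezout-type considerations'' to decompose a $1$-dimensional definable set into $O(D^{n-1})$ irreducible pieces and then apply the dimension-one Wilkie bound of Butler / Jones--Thomas on each piece. But $X$ is not algebraic, so B\'ezout does not directly control the number of pieces of $X\cap V$, and even if one had such a decomposition the one-dimensional Wilkie bounds come with constants that depend on the set, so uniformity across $(\log N)^{O(1)}$ pieces is not automatic. The paper instead first reduces (via \cite[Theorem 5.4]{Jones} and a projection argument from \cite{Butler}) to $X$ the graph of an implicitly definable Pfaffian function on an open cell, and then invokes \cite[Lemma 3.3]{Jones} ($i=2$) and \cite[Proposition 5.3]{Jones} ($i=3$), which give the bound $|(X\cap V(P))^{\mathrm{trans}}(K,N)|\le c_{1}(X,K)\,d^{c_{2}(X)}(\log N)^{c_{3}(X)}$ \emph{uniformly} and polynomially in the degree $d$ of $P$. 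That uniformity in $d$ is exactly what is needed to absorb $D\ll(\log N)^{\kappa/(n-\kappa)}$ into a polylogarithmic final bound, and it is precisely where the hypothesis $\dim X\le 2$ enters; you would need to prove such a uniform-in-degree result to make your route go through.
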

 
The proof of Theorem \ref{app} uses the polynomial method, but rather than using Bombieri-Pila determinant method as in \cite{Butler, Jones, Pila4}, we use a variant of Siegel's lemma. That a variant of Siegel's lemma could be applied to the problem of counting points in o-minimal structures is not new; Wilkie in \cite[Lecture 2, $\S$ 6.3, $\S$ 6.4]{JonesWilkie} gives a proof of Theorem \ref{pilawilkietheorem} similar to the one in \cite{Pila2}, using a lemma of Thue-Siegel instead of Bombieri-Pila determinant method. Both proofs, however, rely on the fact that any set definable in an o-minimal structure admits good parametrizations (see \cite[Theorem 2.3, Theorem 2.5]{Pila2}). The novelty in our approach resides that in order to apply Siegel's lemma, instead of proving that the sets possesses well-behaved parametrizations, we use that the integral points of a set $X$ definable in an o-minimal structure must occupy few residue classes modulo $p$ for many primes $p$. Indeed, let $X\subseteq \mathbb{R}^{n}$  be a set definable in $\mathbb{R}_{\exp}$. Given a prime $p\in \mathbb{Z}$, define
\begin{center}
$X_{p}:=\left \{(x_{1}.\ldots ,x_{n})\mod(p):(x_{1},\ldots ,x_{n})\in X\cap \mathbb{Z}^{n}\right \}$.
\end{center}
It is easy to show (see Section \ref{4}) that Conjecture \ref{wilkie} implies
\begin{equation}
|X^{\text{trans}}(\mathbb{Z},N)_{p}|\leq p^{\kappa}
\label{PilaWilkielocal}
\end{equation}
for all primes $c_{1}\log(N)^{\frac{n}{n-\kappa}}\leq p\leq 2c_{1}\log(N)^{\frac{n}{n-\kappa}}$, with $0\leq \kappa<n$. Conversely, suppose that for all $N>e$ there exist constants $c_{1}:=c_{1}(X)$, $\kappa=\kappa(X)$ such that the set $X$ verifies \eqref{PilaWilkielocal} for all primes $c_{1}\log(N)^{\frac{n}{n-\kappa}}\leq p\leq 2c_{1}\log(N)^{\frac{n}{n-\kappa}}$. Since $X^{\text{trans}}(\mathbb{Z},N)$ is, by definition, highly non-algebraic, if one could show that $X^{\text{trans}}(\mathbb{Z},N)$ possesses some sort of algebraic structure, one would expect that the set $X^{\text{trans}}(\mathbb{Z},N)$ is small.

In order to formalize the last claim, we recall the following result of Walsh that established a conjecture of Helfgott and Venkatesh in \cite{Helfgott} regarding the presence of algebraic structure in sets badly distributed at the level of residue classes. 

\begin{theorem}[{\cite[Theorem 1.3]{Walsh}}]
For every positive integer $d$, and real $0\leq \kappa<d$, there exists $\tau=\tau(d,\kappa)>0$ such that the following holds. Write $\mathcal{P}_{I}$ for the primes in the interval
\begin{center}
$I=\left[ \tau\left(\log(N)\right)^{\frac{d}{d-\kappa}},2\tau\left( \log(N) \right)^{\frac{d}{d-\kappa}} \right]$.
\end{center}
Then, for every $S\subseteq \{-N,\ldots ,-1,0,1,\ldots ,N\}^{d}$ occupying $\lesssim p^{\kappa}$ residue classes modulo $p$ for every $p\in \mathcal{P}_{I}$, and every $\varepsilon>0$, there exists some non-zero $P\in \mathbb{Z}[X_{1},\ldots ,X_{d}]$ of complexity $\lesssim_{\kappa,d,\varepsilon}(\log(N))^{\frac{\kappa}{d-\kappa}}$ vanishing on at least $(1-\varepsilon)|S|$ points of $S$
\label{WW}
\end{theorem}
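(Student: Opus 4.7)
The plan is to apply Minkowski's lattice-point theorem, an integral form of Siegel's lemma, to the system of congruence conditions coming from the residue-class hypothesis, rather than the Bombieri--Pila determinant method. First I would fix $D:=\lceil C_{1}(\log(N))^{\kappa/(d-\kappa)}\rceil$ for a constant $C_{1}=C_{1}(d,\kappa)$ to be chosen, and set $M:=\binom{D+d}{d}$, the number of monomials of degree $\leq D$. An integer polynomial $P\in\mathbb{Z}[X_{1},\ldots,X_{d}]$ of degree $\leq D$ is then identified with its coefficient vector in $\mathbb{Z}^{M}$, and the goal becomes to produce a non-zero such $P$ that vanishes at every point of $S$.

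For each $p\in\mathcal{P}_{I}$ and each of the at most $R(p)\ll p^{\kappa}$ residue classes $\bar s\in(\mathbb{Z}/p\mathbb{Z})^{d}$ occupied by $S$, imposing $P(\bar s)\equiv 0\pmod{p}$ is a single linear equation in the coordinates of $P$ over $\mathbb{F}_{p}$. Collecting these congruences as $p$ ranges over $\mathcal{P}_{I}$ cuts out a sublattice $L\subseteq\mathbb{Z}^{M}$ of index at most $\prod_{p\in\mathcal{P}_{I}}p^{R(p)}\leq Q^{R_{\max}}$, where $Q:=\prod_{p\in\mathcal{P}_{I}}p$ and $R_{\max}:=\max_{p\in\mathcal{P}_{I}}R(p)$. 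Minkowski's first theorem then produces a non-zero $P\in L$ with $\|P\|_{\infty}\leq Q^{R_{\max}/M}$. Since $R_{\max}\ll\tau^{\kappa}(\log(N))^{d\kappa/(d-\kappa)}$ and $M\asymp C_{1}^{d}(\log(N))^{d\kappa/(d-\kappa)}$, choosing $C_{1}$ large enough relative to $\tau$ and the implicit constant in the hypothesis makes the ratio $R_{\max}/M$ as small as desired, say $\leq 1/2$.

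To conclude, for any $\mathbf{x}\in S\subseteq\{-N,\ldots,N\}^{d}$ one has $P(\mathbf{x})\in\mathbb{Z}$ with $P(\mathbf{x})\equiv 0\pmod{Q}$ by construction, together with the crude bound $|P(\mathbf{x})|\leq\|P\|_{\infty}\cdot M\cdot N^{D}$. By the prime number theorem, $\log Q\asymp\tau(\log(N))^{d/(d-\kappa)}$, whereas $D\log(N)\asymp C_{1}(\log(N))^{d/(d-\kappa)}$. Hence, provided $\tau=\tau(d,\kappa)$ is taken sufficiently large (large enough to then accommodate a choice of $C_{1}$ as above), one obtains $|P(\mathbf{x})|<Q$, forcing $P(\mathbf{x})=0$. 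Thus $P$ vanishes on all of $S$, which is strictly stronger than vanishing on $(1-\varepsilon)|S|$ of its points.

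The hard part is the coupled choice of the three constants. Reading the estimates above, $C_{1}$ must be of order at least $\tau^{\kappa/d}$ (up to factors depending on $d$ and on the hypothesis constant) so that $R_{\max}/M$ can be forced below $1/2$, while $\tau$ must in turn exceed a multiple of $C_{1}$ so that $\log Q$ strictly dominates $D\log(N)+O(\log M)$. These two requirements coexist precisely because $\kappa<d$, so $\tau\gg\tau^{\kappa/d}$ once $\tau$ is large, and the matching of the exponents on $(\log(N))^{1/(d-\kappa)}$ on the two sides of each inequality is exactly what pins down the degree bound $(\log(N))^{\kappa/(d-\kappa)}$.
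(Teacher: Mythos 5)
Your proposal is correct and, as far as I can see, takes a genuinely different route from the argument in the paper (which follows Walsh's original proof). The paper first constructs, by an averaging/probabilistic argument (Proposition \ref{characteristicproposition2}), a small ``dense'' subset $\mathcal{C}\subseteq S$ of size at most $r\asymp(\log N)^{d\kappa/(d-\kappa)}$, uses a Siegel-lemma style parameter count (Lemma \ref{Siegel}) to produce a low-complexity $P$ vanishing on $\mathcal{C}$, and then argues that for each $\mathbf{x}$ in a positive-proportion subset $X'$ there are enough primes $p$ for which $\mathbf{x}$ collides with $\mathcal{C}$ modulo $p$ to force $P(\mathbf{x})=0$; the $(1-\varepsilon)$ in the statement is obtained by $O_{\varepsilon}(1)$ iterations of this positive-proportion step, which is why the complexity constant depends on $\varepsilon$. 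You instead bypass the construction of $\mathcal{C}$ entirely: you impose \emph{all} the congruence conditions $P(\bar s)\equiv 0\pmod p$ at once, which cuts out a full-rank sublattice $L\subseteq\mathbb{Z}^{M}$ (full rank because $Q\mathbb{Z}^{M}\subseteq L$) of index at most $\prod_p p^{R(p)}\leq Q^{R_{\max}}$, and apply Minkowski's first theorem directly to $L$. The key difference is that the paper's parameter count is a $\mathbb{Q}$-linear argument and so must keep the number of vanishing conditions below the number of monomials, which forces the detour through the small set $\mathcal{C}$; Minkowski instead only cares about the covolume, so the (enormous) number of congruence constraints is harmless as long as the exponent $R_{\max}/M$ is small, and the $\kappa<d$ gap is exactly what lets the two requirements on $C_{1}$ and $\tau$ be met simultaneously. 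Your argument therefore avoids both the entropy argument and the iteration, and actually yields the stronger conclusion that $P$ vanishes on all of $S$ with complexity independent of $\varepsilon$ (depending only on $d$, $\kappa$, and the implicit constant $\alpha$ in the hypothesis $|S_{p}|\leq\alpha p^{\kappa}$, on which $\tau$ must be allowed to depend in both approaches). One point worth spelling out: for bounded $N$ the interval $I$ may contain too few primes for the Chebyshev bound on $\log Q$, so the argument should be stated for $N$ larger than a threshold $N_{0}(d,\kappa,\alpha)$, with small $N$ absorbed into the implied constant in the complexity bound. Also note that the paper's approach does earn its keep in the generalization to a subvariety $Z\subsetneq\mathbb{P}^{m}$ (Theorem \ref{affinegeneralization}), where one must additionally avoid producing a $P$ that vanishes identically on $Z$; the Noether-normalization change of variables used there is not automatic in the Minkowski set-up and would require some extra care, whereas in the pure $\mathbb{Z}^{d}$ case you handle, this issue does not arise.
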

Here, by a polynomial $P$ of complexity at most $C$ we mean that $P$ has degree at most $C$ and its coefficients are bounded by $N^{C}$. 

Now, we  may explain our strategy to prove Theorem \ref{app} in the case $K=\mathbb Q$. Let $X$ be the graph of the function $f$. By hypothesis, the set $X^{\text{trans}}(\mathbb{Z},N)$ occupies few residue classes modulo $p$ for all $p\in \mathcal{P}_{I}$. Using Theorem \ref{WW}  we conclude that some hypersurface $V(P)$ of degree $O((\log(N))^{c(X)})$ vanishes at a positive proportion of $X^{\text{trans}}(\mathbb{Z},N)$. To conclude that the set $(X\cap V(P))^{\text{trans}}(\mathbb{Z},T)$ is small, we use bounds for the complexity of the intersection of Pfaffian sets, as in \cite{Jones}.

We will see that the same strategy works for a number field $K\subseteq \mathbb{R}$, once we extend there Theorem \ref{WW}. In fact  in this article we are going to prove a generalization of Theorem \ref{WW} for global fields, replacing $\mathbb{Z}$ with the ring of integers $\mathcal{O}_{K}$, with $K$ a global field. A diophantine analogue of the set $\{-N,\ldots ,-1,0,1,\ldots,N\}$ is the subset of elements $x\in \mathcal{O}_{K}$ of affine height $H(x)$ at most $N$. We denote this set by $[N]_{\mathcal{O}_{K}}$. Then we have the following definition of complexity.
\begin{definition}[Complexity]
A non-zero polynomial $P\in \mathcal{O}_{K}[X_{1},\ldots ,X_{n}]$ has \emph{complexity at most $C$ in $[N]_{\mathcal{O}_{K}}^{n}$} if it has degree at most $C$ and its coefficients have affine height bounded by $N^{C}$.
\label{complexity}
\end{definition}
For a non-zero ideal $\mathfrak{a}\subseteq \mathcal{O}_{K}$, let $\mathcal{N}_{K}(\mathfrak{a})$ be the absolute norm of $\mathfrak{a}$, defined as the (finite) cardinal of the set $\mathcal{O}_{K}/\mathfrak{a}$. This allows us to generalize the notion of ill-distributed set at the level of residue classes, in a straightforward way. For a prime ideal $\mathfrak{p}\in \mathcal{O}_{K}$, and a set $X\subseteq \mathcal{O}_{K}^{n}$, we denote $X_{\mathfrak{p}}$ for the set of residue classes of $X$ modulo $\mathfrak{p}$:
\begin{equation}
X_{\mathfrak{p}}=\left \{(x_{1}.\ldots ,x_{n})\mod(\mathfrak{p}):(x_{1},\ldots ,x_{n})\in X\right \}.
\end{equation}
Following the same strategy of Walsh, we prove the next generalization of Theorem \ref{WW}  
\begin{theorem}
For all $n>0$, all real $0\leq \kappa<n$ and all global field $K$, there exists $\tau=\tau(n,\kappa,K)\geq 1$ such that the following holds. Denote $I$ for the interval $[\tau(\log(N))^{\frac{n}{n-\kappa}},2\tau(\log(N))^{\frac{n}{n-\kappa}}]$. Write $\mathcal{P}_{I,K}$ for the set of primes ideals $\mathfrak{p}\subseteq \mathcal{O}_{K}$ defined as
\begin{center}
$\mathcal{P}_{I,K}:=\begin{cases}\left\{\mathfrak{p}\subseteq \mathcal{O}_{K}:\mathcal{N}_{K}(\mathfrak{p})\in I\right\} \;\text{if}\;K\;\text{is a number field},\\ \left\{ \mathfrak{p}\subseteq \mathcal{O}_{K}:\mathcal{N}_{K}(\mathfrak{p})= \tau(\log(N))^{\frac{n}{n-\kappa}}\right\}\;\text{if}\;K\;\text{is a function field}.\end{cases}$
\end{center}
Then, for every $X\subseteq [N]_{\mathcal{O}_{K}}^{n}$ with $|X_{\mathfrak{p}}|\lesssim \mathcal{N}_{K}(\mathfrak{p})^{\kappa}$ for every prime $\mathfrak{p}\in \mathcal{P}_{I,K}$, and every $\varepsilon>0$, there exists some non-zero $P\in \mathcal{O}_{K}[X_{1},\ldots ,X_{n}]$, of complexity $\lesssim_{\kappa,n,\varepsilon,K}(\log(N))^{\frac{\kappa}{n-\kappa}}$ in $[N]_{\mathcal{O}_{K}}^{n}$ vanishing on at least $(1-\varepsilon)|X|$ points of $X$.  
\label{generalization}
\end{theorem}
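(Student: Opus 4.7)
The plan is to follow Walsh's strategy, now adapted to $\mathcal{O}_K$: combine a Minkowski--Siegel argument on the lattice of polynomial coefficients with the elementary arithmetic fact that a nonzero $a \in \mathcal{O}_K$ lying in an ideal $\mathfrak{a}$ has $|N_{K/\mathbb{Q}}(a)| \geq \mathcal{N}_K(\mathfrak{a})$. I would look for $P \in \mathcal{O}_K[X_1, \ldots, X_n]$ of degree at most $D := \lceil C_1 (\log N)^{\kappa/(n-\kappa)} \rceil$, for a constant $C_1 = C_1(n, \kappa, K)$ fixed later. The ansatz space has dimension $N_0 = \binom{n+D}{n} \asymp_n D^n$. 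Setting $\mathfrak{a} := \prod_{\mathfrak{p} \in \mathcal{P}_{I, K}} \mathfrak{p}$, the prime ideal theorem (in the number-field case) and the Weil bound / Riemann--Roch count (in the function-field case) give $|\mathcal{P}_{I, K}| \asymp_K (\log N)^{n/(n-\kappa)} / \log \log N$ and hence $\log \mathcal{N}_K(\mathfrak{a}) \asymp_K \tau (\log N)^{n/(n-\kappa)}$.

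The core step is a Minkowski argument on the evaluation map $\mathcal{E} : \mathcal{O}_K^{N_0} \to \prod_{\mathfrak{p} \in \mathcal{P}_{I, K}} (\mathcal{O}_K/\mathfrak{p})^{|X_\mathfrak{p}|}$ sending a coefficient vector $c = (c_m)$ to $(P_c(\bar{\mathbf{y}}) \bmod \mathfrak{p})_{\mathfrak{p}, \bar{\mathbf{y}}}$, where $P_c := \sum_m c_m m$ and $\bar{\mathbf{y}}$ runs over (chosen representatives of) $X_\mathfrak{p}$. By CRT the kernel $L := \ker \mathcal{E}$ is a sublattice of $\mathcal{O}_K^{N_0}$ of index $[\mathcal{O}_K^{N_0} : L] = \prod_\mathfrak{p} \mathcal{N}_K(\mathfrak{p})^{r_\mathfrak{p}}$ with $r_\mathfrak{p} \leq |X_\mathfrak{p}| \ll \mathcal{N}_K(\mathfrak{p})^\kappa$. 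Minkowski's theorem on $L$, viewed as a rank-$N_0 [K:\mathbb{Q}]$ lattice in $K_\infty^{N_0}$, produces a nonzero $c \in L$ with
\begin{equation*}
\log H(P_c) \ll_K \frac{\sum_{\mathfrak{p}} r_\mathfrak{p} \log \mathcal{N}_K(\mathfrak{p})}{N_0 [K : \mathbb{Q}]} \ll_{n, \kappa, K} \frac{(\log N)^{n(1+\kappa)/(n-\kappa)}}{C_1^n (\log N)^{n\kappa/(n-\kappa)}} = \frac{(\log N)^{n/(n-\kappa)}}{C_1^n},
\end{equation*}
so $P_c$ has complexity $\ll_{n, \kappa, K} (\log N)^{\kappa/(n-\kappa)}$ as desired.

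By construction $P_c(\mathbf{x}) \in \mathfrak{p}$ for every $\mathbf{x} \in X$ and every $\mathfrak{p} \in \mathcal{P}_{I, K}$, hence $P_c(\mathbf{x}) \in \mathfrak{a}$. A routine estimate bounds $|P_c(\mathbf{x})|_v \leq H(P_c)_v \cdot N_0 \cdot N^D$ at each archimedean place $v$ of $K$, whence $|N_{K/\mathbb{Q}}(P_c(\mathbf{x}))| \leq N^{O_K(D)}$. Choosing $\tau = \tau(n, \kappa, K)$ large enough so that $\log \mathcal{N}_K(\mathfrak{a})$ exceeds the total norm bound $O_K(D \log N)$ forces $P_c(\mathbf{x}) = 0$ for every $\mathbf{x} \in X$; this produces a polynomial vanishing on all of $X$, in particular on $(1 - \varepsilon)|X|$ of its points, with the slack of $\varepsilon$ absorbed trivially in this version (it can be genuinely exploited to improve constants by pigeonholing to heavy residue classes).

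The hardest step is the parameter balance in the Minkowski argument: $C_1$ must be large enough that $N_0 [K:\mathbb{Q}]$ absorbs $\sum r_\mathfrak{p} \log \mathcal{N}_K(\mathfrak{p})$ within the target complexity, while $\tau$ must be large relative to $C_1$ so that the norm estimate promotes $\mathfrak{a}$-divisibility to exact vanishing---these two conditions are compatible precisely because $D \log N$ and $\log \mathcal{N}_K(\mathfrak{a})$ are of the same order $(\log N)^{n/(n-\kappa)}$. A secondary difficulty is the function-field setting, where $\mathcal{P}_{I, K}$ degenerates to the primes of a single fixed degree; there the prime count uses the Weil bound and the Minkowski step takes an adelic form on $K_\infty^{N_0}$. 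Throughout one must also track the contribution of the unit group and of archimedean valuations to $H(P_c(\mathbf{x}))$ via the product formula.
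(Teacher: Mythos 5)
Your proposal is correct and takes a genuinely different route from the paper, so a comparison is worthwhile. The paper (following Walsh's second paper \cite{Walsh}) proceeds in three stages: (i) a probabilistic argument (Proposition \ref{characteristicproposition2}) producing a small ``dense'' set $\mathcal{C}\subseteq X$ of size $\leq r\asymp \tau^{\kappa}(\log N)^{n\kappa/(n-\kappa)}$ together with a subset $X'\subseteq X$ of size $\gg|X|$ such that every $\textbf{x}\in X'$ collides with some element of $\mathcal{C}$ modulo many primes; (ii) a pigeonhole Siegel lemma (Lemma \ref{Siegel}) to find a low-complexity polynomial vanishing \emph{exactly} on $\mathcal{C}$; (iii) a norm argument promoting the many-prime divisibility of $P(\textbf{x})$, $\textbf{x}\in X'$, to exact vanishing, followed by an iteration to reach $(1-\varepsilon)|X|$. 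You skip stage (i) entirely: instead of constructing $\mathcal{C}$, you impose the congruence conditions $P(\bar{\textbf{y}})\equiv 0\ (\mathfrak{p})$ directly for \emph{all} residue classes $\bar{\textbf{y}}\in X_{\mathfrak{p}}$ and \emph{all} $\mathfrak{p}\in\mathcal{P}_{I,K}$, as in Walsh's earlier inverse-sieve paper \cite{Walsh2}. This buys a stronger conclusion: since $P(\textbf{x})\in\mathfrak{a}=\prod_{\mathfrak{p}}\mathfrak{p}$ for every $\textbf{x}\in X$, the norm argument forces $P$ to vanish on all of $X$, with no $\varepsilon$-loss and no iteration. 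The price is that the number of interpolation conditions increases from $|\mathcal{C}|\asymp\tau^{\kappa}(\log N)^{n\kappa/(n-\kappa)}$ (equations over $\mathcal{O}_{K}$, each worth $\asymp\log N$ in height) to $\log[\mathcal{O}_{K}^{N_{0}}:L]\asymp\tau^{\kappa+1}(\log N)^{n(\kappa+1)/(n-\kappa)}$ (conditions modulo $\mathfrak{p}$, each worth only $\asymp\log\log N$); the trade-off nets out, but notice that your displayed Minkowski estimate quietly suppresses the $\tau^{\kappa+1}$ factor in the numerator. When you keep it, the two constraints become $C_{1}\gg\tau^{\kappa/n}$ (so that the height contribution $\tau^{\kappa+1}/C_{1}^{n}\ll\tau$) and $C_{1}\ll\tau$ (so that $D\log N\ll\tau(\log N)^{n/(n-\kappa)}$), which are compatible exactly because $\kappa/n<1$; spelling this out would close the only real gap in the parameter balance. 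Two smaller caveats: the function-field case requires an actual replacement for Minkowski (the paper sidesteps this by using a counting/pigeonhole Siegel lemma valid in both cases, which is closer in spirit to what you would need); and your argument as written only treats the affine case $Z=\mathbb{P}^{m}$, whereas the paper derives Theorem \ref{generalization} from the more general projective statement (Theorem \ref{affinegeneralization}), which additionally requires the Noether-normalization change of variables to ensure the interpolating polynomial is nonzero on $Z$.
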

In fact, we obtain a more general theorem than Theorem \ref{generalization}, in which the set $X$ lies in a projective variety. For the sake of simplicity, we defer the details to section $\S 3$, where this generalization is proved (see Theorem \ref{affinegeneralization}).

 \section{Heights in global fields}

\subsection{Absolute values}

The references for this section are the first two chapters of \cite{Bombieri}, and section B of \cite{Hindry} for the basic theory of heights, and chapter 5 of \cite{Rosen} and chapter 1 of \cite{Stick} for the theory of function fields. 

Throughout this paper, $k$ denotes either the field $\mathbb{Q}$ of rational numbers or the field $\mathbb{F}_{q}(T)$ of rational functions in one indeterminate over a finite field $\mathbb{F}_{q}$. We fix an algebraic closure $\overline{k}$ of $k$ and denote by $K\subseteq \overline{k}$ a global field, i.e. a finite separable extension of $k$.

Let us denote $M_{K}$ for the set of places $v$ of $K$. For each $v\in M_{K}$ let $K_{v}$ be the completion of $K$ with respect to $v$. If $\mathcal{O}_{v}$  is the valuation ring of $v$ in $K_{v}$, we denote $\mathfrak{m}_{v}$ for its maximal ideal. Following \cite{Bombieri}, we take normalized representatives $|\cdot |_{v}$ for the places $v\in M_{K}$. First, suppose that $K=k=\mathbb{Q}$.

\begin{itemize}
\item[(i)] If $v=\infty$, then $|\cdot|_{v}$ is the usual archimedean value of $k$;
\item[(ii)] If $v$ corresponds to a prime $p$, then $|\cdot|_{p}$ is the $p$-adic absolute value in $k$, with $|p|_{p}=p^{-1}$. 
\end{itemize}

Suppose now that $K=k=\mathbb{F}_{q}(T)$.
\begin{itemize}
\item[(i)] If $v$ corresponds to an irreducible polynomial $p\in \mathbb{F}_{q}[X]$, then $|\cdot |_{p}$ is the $p$-adic absolute value in $k$, with $|f|_{p}=q^{-\text{ord}_{p}(f)\deg(p)}$, $\text{ord}_{p}(f)$ being the order of $p$ in $f$;
\item[(ii)] If $v=\infty$ is the absolute value with $1/T\in \mathfrak{m}_{v}$, then $|\cdot|_{\infty}$ is the non-archimedean absolute value in $k$ with $|f|_{\infty}=q^{-\deg(f)}$, where $\deg(f)=\deg(h)-\deg(g)$ if $f=g/h$.
\end{itemize}

Now, for general $K$, let $w\in M_{K}$ be a place of $K$ which is over $v\in M_{k}$. We consider the normalized representative $||\cdot ||_{w}$ given by
\begin{equation}
||x||_{w}:=|N_{K_{w}/k_{v}}(x)|_{v}^{\frac{1}{[K:k]}}.
\label{placedefinition}
\end{equation}

The product formula is then
\begin{equation}
\displaystyle \prod_{w\in M_{K}}||x||_{w}=1
\label{product formula}
\end{equation}
for all $x\in K^{\times}$.

For a global field $K$, $M_{K,\infty}$ will be the set of places lying over the place $v=\infty\in M_{k}$. We have that $M_{K,\infty}$ has at most $[K:k]$ elements. The remaining places $M_{K,\text{fin}}:=M_{K}\backslash M_{K,\infty}$ are the finite places.

The ring of integers of $K$, which we will denote $\mathcal{O}_{K}$, is defined as the intersection of the valuation rings $\mathcal{O}_{v}$ for $v\in M_{K,\text{fin}}$

\begin{center}
$\mathcal{O}_{K}:=\displaystyle \bigcap_{v\in M_{K,\text{fin}}}\{x\in K:||x||_{v}\leq 1\}$.
\end{center}

Taking $K=k$, we have $\mathcal{O}_{k}=\mathbb{Z}$ or $\mathcal{O}_{k}=\mathbb{F}_{q}[T]$. In fact, $\mathcal{O}_{K}$ is the integral closure of $\mathcal{O}_{k}$ in $K$. A prime $\mathfrak{p}$ of $K$ is a non-zero prime ideal of $\mathcal{O}_{K}$ and it is in one-one correspondence with the maximal ideals $\mathfrak{m}_{v}$ with $v\in M_{k,\text{fin}}$. We have that the quotient field $\mathcal{O}_{K}/\mathfrak{p}$ is isomorphic to $\mathcal{O}_{v}/\mathfrak{m}_{v}$, where $v$ is the finite place that corresponds to $\mathfrak{p}$. In particular, this quotient is a finite field extending $\mathbb{F}_{q}$ and we denote its cardinal by $\mathcal{N}_{K}(\mathfrak{p})$; it is  the absolute norm of $\mathfrak{p}$. More generally, for any non-zero ideal $\mathfrak{a}\subsetneq \mathcal{O}_{K}$, we define $\mathcal{N}_{K}(\mathfrak{a})=|\mathcal{O}_{K}/\mathfrak{a}|$; this definition is multiplicative in the ideals. 

When $K$ is a function field, we have $\mathcal{N}_{K}(\mathfrak{p})=q^{[\mathcal{O}_{K}/\mathfrak{p}:\mathbb{F}_{q}]}$. The number $[\mathcal{O}_{K}/\mathfrak{p}:\mathbb{F}_{q}]$ is called the degree of $\mathfrak{p}$, and we denote it by $\deg(\mathfrak{p})$. Moreover, any place $w\in M_{K,\infty}$ is ultrametric, and the quotient field $\mathcal{O}_{w}/\mathfrak{m}_{w}$ is finite. We define the norm of $w$ and denote it by $\mathcal{N}_{K}(\mathfrak{m}_{w})$, as the cardinal of $\mathcal{O}_{w}/\mathfrak{m}_{w}$, and as before we have $\mathcal{N}_{K}(\mathfrak{m}_{w})=q^{[\mathcal{O}_{w}/\mathfrak{m}_{w}:\mathbb{F}_{q}]}$. The number $[\mathcal{O}_{w}/\mathfrak{m}_{w}:\mathbb{F}_{q}]$ is also called the degree of $w$, and we denote it by $\deg(w)$. 

\begin{remark}
For function fields defined over $\mathbb{F}_{q}$, there is another definition of a prime in $K$, which is more standard (see \cite{Rosen,Stick}): a prime in $K$ is a maximal ideal $\mathfrak{m}_{v}$ of a discrete valuation ring $\mathcal{O}_{v}$ with $\mathbb{F}_{q}\subsetneq \mathcal{O}_{v}\subsetneq K$. The definition we gave above coincides with this one in the primes of $M_{K}\backslash M_{K,\infty}$.  Also, we note that $\mathcal{O}_{K}$ is non-canonical: it depends on the transcendental element $T$ in the definition of $\mathcal{O}_{k}=\mathbb{F}_{q}[T]$. Instead, we could take a place $v\in M_{k}$ represented by the absolute value $|\cdot|_{p}$ corresponding to an irreducible polynomial $p(T)\in \mathbb{F}_{q}[T]$, and define $\mathcal{O}_{K}=\bigcap_{w\not | v}\mathcal{O}_{w}$. For instance, if $v$ corresponds to the irreducible polynomial $p(T)=T$, then $\mathcal{O}_{k}=\mathbb{F}_{q}[\frac{1}{T}]$ and $\mathcal{O}_{K}$ would be its integral closure over $K$.
\end{remark}

We now write the normalization \eqref{placedefinition} for the places $w\in M_{K,\text{fin}}$ in terms of valuations. We have that such $w$ corresponds to a prime ideal $\mathfrak{p}$ of $\mathcal{O}_{K}$, obtained as $\mathfrak{m}_{w}\cap \mathcal{O}_{K}$. We denote $\text{ord}_{\mathfrak{p}}$ for the corresponding normalized discrete valuation on $\mathfrak{p}$. Using \cite[Chapter 1, Proposition 2.5]{Lang} and following the remarks in \cite[Chapter 2, $\S 2$]{Lang} which also work for global fields which are function fields, we can write \eqref{placedefinition} as
\begin{equation}
||x||_{w}=\mathcal{N}_{K}(\mathfrak{p})^{-\frac{\text{ord}_{\mathfrak{p}}(x)}{[K:k]}}.
\label{placedefinition2}
\end{equation}
With \eqref{placedefinition2} we can express the norm of an element $x\in \mathcal{O}_{K}\backslash \{0\}$ in a convenient way. Indeed, the ideal $(x)$ factorizes as $\prod_{\mathfrak{p}\in S}\mathfrak{p}^{\text{ord}_{\mathfrak{p}}(x)}$.  Let $w_{\mathfrak{p}}$ be the corresponding place associated to $\mathfrak{p}$. Then

\begin{equation}
\mathcal{N}_{K}(x)^{\frac{1}{[K:k]}}=\prod_{\mathfrak{p}\in S}\mathcal{N}_{K}(\mathfrak{p})^{\frac{\text{ord}_{\mathfrak{p}}(x)}{[K:k]}}=\prod_{w\in S}||x^{-1}||_{w}.
\label{dedekind}
\end{equation}

\subsection{Heights} The usual projective height for any $(x_{1},\ldots ,x_{n})\in \overline{k}^{n}$ with coordinates in a global field is defined in the following way. If $K$ is a global field in which the coordinates of  ${\bf x}$ are defined, then 
\begin{center}
$H(x_{1},\ldots ,x_{n})=\displaystyle \prod_{v\in M_{K}}\max_{1\leq i\leq n}||x_{i}||_{v}$.
\end{center}
This definition is invariant under multiplication by non-zero scalars. Thus, given ${\bf x}\in \mathbb{P}^{n}(\overline{k})$ with coordinates $(x_{0}:x_{1}:\ldots :x_{n})$ with $x_{i}\in K$ for all $i$, $K$ a global field, we define $H({\bf x})=H(x_{0}:\ldots :x_{n})=H(x_{0},\ldots ,x_{n})$. If $x\in \overline{k}$ and lies in a global field, then $H(x)$ (the affine height of $x$) will always denote the projective height $H(1,x)$. Note that if $x\in \mathbb{Z}$ then $H(x)=|x|$, the absolute value of $x$, and if $x\in \mathbb{F}_{q}[T]$, then $H(x)=q^{\deg(x)}$, where $\deg(x)$ is the degree of $x$. In these two cases, $H(x)=\mathcal{N}_{K}(x)$. For $k=\mathbb{F}_{q}(T)$, let $K/k$ be a finite separable extension. Then
\begin{align}
H(x)=\prod_{v\in M_{K}}\max\{1,||x||_{v}\}=\prod_{v\in M_{K},||x||_{v}\geq 1}||x||_{v} & =\prod_{v\in M_{K},||x||_{v}\geq 1}q^{\frac{a_{v}}{[K:k]}}\label{altura para cuerpos funcionales}
\\ & =q^{\frac{a}{[K:k]}}, \;a\in \mathbb{Z}_{\geq 0}. \nonumber
\end{align}
In particular, if $K$ is function field, it is more natural to count points of height equal to a parameter $N=q^{\frac{a}{[K:k]}}$ for some positive integer $a$, instead of counting points of height bounded by a parameter $N$, as in the number field case. However, for this article it will be more convenient to consider the set of points of height bounded by $N$.

For our purposes, it will be necessary to understand how the affine height of a point behaves under the action of a polynomial. It is easy to show (see \cite[Proposition B.2.5. (a)]{Hindry}) that if $\phi:\mathbb{P}^{n}\rightarrow \mathbb{P}^{m}$ is a rational map of degree $D$ defined over $\overline{k}$, $\phi=(f_{0}:\ldots :f_{m})$ with $f_{i}$ homogeneous polynomials of degree $D$, and $Z\subseteq \mathbb{P}^{n}$ is the subset of common zeros of the $f_{i}$'s (so $\phi$ is defined on $\mathbb{P}^{n}\backslash Z$), then
\begin{equation}
H(\phi({\bf x}))\leq R H({\bf a})H({\bf x})^{D}
\label{polynomialheight}
\end{equation}
for all ${\bf x}\in \mathbb{P}^{n}(\overline{k})\backslash Z$ with coordinates lying in a global field, where $R$ is the maximum number of monomials appearing in any one of the $\phi_{i}$, and ${\bf a}$ is the projective point with coordinates the coefficients of all the $\phi_{i}$. It follows that the same upper bound \eqref{polynomialheight} holds for $H(1:P({\bf x}))$, where $P(T_{1},\ldots ,T_{n})\in K[T_{1},\ldots ,T_{n}]$ and ${\bf x}\in \overline{k}^{n}$, namely, if $P(T_{1},\ldots ,T_{n})=\sum_{(i_{1},\ldots ,i_{n})}c_{i_{1},\ldots ,i_{n}}T_{1}^{i_{1}}\cdots T_{n}^{i_{n}}$, ${\bf c}=(c_{i_{1},\ldots ,i_{n}})_{i_{1},\ldots ,i_{n}}$ and $R$ is the number of $n$-tuples $(i_{1},\ldots, i_{n})$ with $c_{i_{1},\ldots ,i_{n}}\neq 0$, we have
\begin{equation}
H(P({\bf x}))\leq RH(1:{\bf c})H(1:{\bf x})^{\deg(P)}.
\label{polynomialheight2}
\end{equation}

Given ${\bf x}=(x_{1},\cdots ,x_{n})\in K^{n}$, we have the bound
\begin{equation}
H(x_{1}:\ldots :x_{n})\leq H(1:x_{1}:\ldots :x_{n})\leq \max_{i}\{H(x_{i})\}^{n}.
\label{inequality of heights0}
\end{equation}
Also, if ${\bf x}=(x_{1},\ldots ,x_{n})\in \mathcal{O}_{K}^{n}$, we have the bound
\begin{equation}
H(x_{1}:\ldots :x_{n})\leq H(1:x_{1}:\ldots :x_{n})\leq \max_{i}\{H(x_{i})\}^{[K:k]}.
\label{inequality of heights}
\end{equation}
We will use the notation:
\begin{center}
$[N]_{\mathcal{O}_{K}}^{n}:=\{{\bf x}=(x_{1},\ldots ,x_{n})\in \mathcal{O}_{K}^{n}:\max_{i}\{H(x_{i})\}\leq N\}$,

$[N]_{K}^{n}:=\{{\bf x}=(x_{1},\ldots ,x_{n})\in K^{n}:\max_{i}\{H(x_{i})\}\leq N\}$,

$[N]_{\mathbb{A}^{n}(K)}:=\{{\bf x}=(x_{1},\ldots ,x_{n})\in K^{n}:H(1:x_{1}:\ldots :x_{n})\leq N\}$,

$[N]_{\mathbb{P}^{n}(K)}:=\{{\bf x}=(x_{0}:\ldots :x_{n})\in \mathbb{P}^{n}(K):H(x_{0}:\ldots :x_{n})\leq N\}$.
\end{center}

Finally, we will need to relate the height and the norm of a point in $x\in \mathcal{O}_{K}$. From equality \eqref{dedekind} and the fact that $H(x^{-1})=H(x)$ for all $x\neq 0$ it follows that

\begin{equation}
\mathcal{N}_{K}(x)\leq H(x)^{[K:k]}\;\text{ for all }x\in \mathcal{O}_{K}\backslash \{0\}.
\label{coefbound}
\end{equation}

\section{Ill-distributed sets in projective varieties}

If $Z$ is a geometrically irreducible projective hypersurface of degree $d$ and dimension $n$ over a finite field $\mathbb{F}_{q}$, the Lang-Weil estimate says that $|Z(\mathbb{F}_{q})|=(1+O_{d,n}(q^{-\frac{1}{2}}))q^{n}$. This implies that if $Z\subseteq \mathbb{P}^{n}(\mathbb{Q})$ is a geometrically irreducible projective hypersurface of degree $d$ over $\mathbb{Q}$, then its reduction modulo $p$ occupies $(1+O_{d,n}(p^{-\frac{1}{2}}))p^{n-1}$ residue classes modulo $p$ for almost all primes $p$. Since $|\mathbb{P}^{n}(\mathbb{F}_{p})|\sim_{n}p^{n}$, we can think of $Z$ as an ill-distributed set at the level of residue classes. Then, in the diophantine context, it is natural to work with sets $X\subseteq \{{\bf x}\in \mathbb{P}^{n}(\mathbb{Q}):H({\bf x})\leq N\}$, or more generally $X\subseteq \{{\bf x}\in Z(\mathbb{Q}):H({\bf x})\leq N\}$ where $Z$ is a projective variety defined over $\mathbb{Q}$, such that the image of $X$ in $\mathbb{P}^{n}(\mathbb{Z}/p\mathbb{Z})$ is small for many primes $p$. This is the approach that we will consider in this section.

From here on, $K$ will be a global field. We denote by $[N]_{\mathbb{P}^{n}(K)}$ the subset of ${\bf x}\in \mathbb{P}^{n}(K)$ with $H({\bf x})\leq N$. For any prime $\mathfrak{p}\subseteq \mathcal{O}_{K}$, consider the reduction modulo $\mathfrak{p}$ map $\pi_{\mathfrak{p}}:\mathbb{P}^{n}(K)\rightarrow \mathbb{P}^{n}(\mathcal{O}_{K}/\mathfrak{p})$, defined as follows. Given $x\in \mathbb{P}^{n}(K)$, choose coordinates $(x_{0}:\ldots :x_{n})$ such that for all $i$, $x_{i}=0$ or $\text{ord}_{\mathfrak{p}}(x_{i})\geq 0$ for all $i$, and there exists $0\leq i_{0}\leq n$ with $\text{ord}_{\mathfrak{p}}(x_{i})=0$. Such coordinates are unique modulo a scalar multiple $\lambda\in \mathcal{O}_{\mathfrak{p}}^{\times}$. Then $\tilde{{\bf x}}=(x_{0},\ldots ,x_{n})\mod \mathfrak{p}$ defines a non-zero point in $\mathbb{A}^{n+1}(\mathcal{O}_{K}/\mathfrak{p})$. We define $\pi_{\mathfrak{p}}({\bf x})$ as the point in $\mathbb{P}^{n}(\mathcal{O}_{K}/\mathfrak{p})$ defined by $\tilde{{\bf x}}$. Also, $\pi_{\mathfrak{p}}({\bf x})=\pi_{\mathfrak{p}}({\bf x}')$ will be denoted as ${\bf x}\equiv {\bf x}'\mod (\mathfrak{p})$. We note that if $P(T_{0},\ldots ,T_{n})$ is a non-zero homogeneous polynomial with coefficients in $\mathcal{O}_{K}$, then for any ${\bf x},{\bf x}'\in \mathbb{P}^{n}(K)$ with ${\bf x}\equiv {\bf x}'\mod(\mathfrak{p})$ if $P({\bf x})\equiv 0\mod (\mathfrak{p})$ then $P({\bf x}')\equiv 0\mod(\mathfrak{p})$.

For a non-zero prime ideal $\mathfrak{p}\in \mathcal{O}_{K}$, and a set $X\subseteq \mathbb{P}^{n}(K)$, we denote $X_{\mathfrak{p}}:=\pi_{\mathfrak{p}}(X)$. We have $|X_{\mathfrak{p}}|\leq |\mathbb{P}^{n}(\mathcal{O}_{K}/\mathfrak{p})|\lesssim_{n}\mathcal{N}_{K}(\mathfrak{p})^{n}$.

Consider $Z\subseteq \mathbb{P}^{m}(\overline{k})$ a projective variety defined over a global field $K$, with homogeneous ideal $\mathfrak{I}\subseteq \mathcal{O}_{K}[T_{0},\ldots ,T_{m}]$. defined by homogeneous polynomials  $f_{1},\ldots ,f_{l}\in \mathcal{O}_{K}[T_{0},\ldots ,T_{m}]$, so $Z=V(f_{1},\ldots ,f_{l})$. Call $\dim(Z)$ the dimension of $Z$. Let $M>0$ be a real number such that $l<M$ and $\dim(Z)<M$ and $\deg(f_{i})<M$ for all $i=1,\ldots ,l$. Let us suppose that $Z$ is geometrically irreducible. Then, Bertini-Noether theorem (\cite[Proposition 10.4.2 and Corollary 10.4.3 (a)]{Fried}) tells us that for all but finitely many primes $\mathfrak{p}$ of $K$ the reduction $Z_{\mathfrak{p}}:=V(\tilde{f_{1}},\ldots \tilde{f_{l}})$, where $\tilde{f_{i}}$ denotes the reduction modulo $\mathfrak{p}$ of $f_{i}$, remains geometrically irreducible and $\dim(Z_{\mathfrak{p}})=\dim(Z)<M$. 

Because of the Lang-Weil estimate we have that the reduction $Z_{\mathfrak{p}}$ has $(1+O_{M}(\mathcal{N}_{K}(\mathfrak{p})^{-1/2}))\mathcal{N}_{K}(\mathfrak{p})^{\dim(Z)}$ residue classes for almost every prime $\mathfrak{p}$.  In what follows we will show that an ill-distributed set in $Z$ has some sort of algebraic structure, with respect to the variety $Z$. For this, we adapt the notion of algebraic structure for subsets in $Z(K,N):=Z(K)\cap [N]_{\mathbb{P}^{m}(K)}$. This requires to extend the definition of complexity of a polynomial. Recall that we defined a non-zero polynomial $P\in \mathcal{O}_{K}[X_{1},\ldots ,X_{n}]$ has complexity at most $C$ in $[N]_{\mathcal{O}_{K}}^{n}$ if it has degree at most $C$ and its coefficients have affine height at most $N^{C}$.

\begin{definition}
Let $Z\subseteq \mathbb{P}^{m}(\overline{k})$ be a projective variety defined over $K$. We say that a homogeneous polynomial $P\in \mathcal{O}_{K}[T_{0},\ldots ,T_{m}]$ has \emph{complexity at most $C$ in $Z(K,N)$} if it does not vanish at $Z$, and there exist a non-zero homogeneous polynomial $Q\in \mathcal{O}_{K}[Y_{0},\ldots ,Y_{\dim(Z)}]$ of complexity at most $C$ in $[N]_{\mathcal{O}_{K}}^{\dim(Z)+1}$ and a linear change of variables $Y_{i}=L_{i}(T_{0},\ldots ,T_{m})$, $1\leq i\leq \dim(Z)$, with the height of the coefficients of the $L_{i}$ bounded by a constant dependent only on $Z$, such that it holds $P(T_{0},\ldots ,T_{m})=Q(L_{0}(T_{0},\ldots ,T_{m}),\ldots ,L_{\dim(Z)}(T_{0},\ldots ,T_{m}))$.
\end{definition}

Now we state the main result of this section.

\begin{theorem}
Let $Z\subseteq \mathbb{P}^{m}(\overline{k})$ be a projective variety defined over a global field $K$, and call $n=\dim(Z)$. Suppose that $Z$ is geometrically irreducible. For all real $0\leq \kappa<n$, there exists $\tau=\tau(n,\kappa,K,Z)\geq 1$ such that the following holds. Denote $I$ for the interval $[\tau(\log(N))^{\frac{n}{n-\kappa}},2\tau(\log(N))^{\frac{n}{n-\kappa}}]$. Write $\mathcal{P}_{I,K}$ for the set of prime ideals $\mathfrak{p}\subseteq \mathcal{O}_{K}$ defined as
\begin{center}
$\mathcal{P}_{I,K}:=\begin{cases}\left\{\mathfrak{p}\subseteq \mathcal{O}_{K}:\mathcal{N}_{K}(\mathfrak{p})\in I\right\}\;\text{if}\;K\;\text{is a number field},\\ \left\{ \mathfrak{p}\subseteq \mathcal{O}_{K}:\mathcal{N}_{K}(\mathfrak{p})= \tau(\log(N))^{\frac{n}{n-\kappa}}\right\}\;\text{if}\;K\;\text{is a function field}.\end{cases}$
\end{center}
Then, for every $X\subseteq Z(K,N)$ with $|X_{\mathfrak{p}}|\lesssim \mathcal{N}_{K}(\mathfrak{p})^{\kappa}$ for every prime $\mathfrak{p}\in \mathcal{P}_{I,K}$, and every $\varepsilon>0$, there exists some homogeneous polynomial $P\in \mathcal{O}_{K}[T_{0},\ldots ,T_{m}]$ of complexity $\lesssim_{\kappa,n,m,\varepsilon,K,Z}(\log(N))^{\frac{\kappa}{n-\kappa}}$ in $Z(K,N)$ and vanishing on at least $(1-\varepsilon)|X|$ points of $X$.  
\label{affinegeneralization}
\end{theorem}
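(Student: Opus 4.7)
The plan is to adapt Walsh's proof of Theorem~\ref{WW} to the projective/global-field setting. The two new ingredients required are (i) a Noether normalization that reduces from an arbitrary geometrically irreducible $Z$ to the case $Z=\mathbb{P}^{n}$, and (ii) a geometry-of-numbers argument over $\mathcal{O}_{K}$ in place of $\mathbb{Z}$, for which I would invoke the Bombieri--Vaaler form of Siegel's lemma (and its function-field analogue). Throughout, the two facts imported from earlier in the paper are the height inequality \eqref{polynomialheight2} for values $H(P(\textbf{x}))$ and the inequality \eqref{coefbound} relating $\mathcal{N}_{K}$ to $H$; together they convert ``$Q(x)\in\mathcal{O}_{K}$ has controlled height and is divisible by many primes'' into ``$Q(x)=0$''.

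First I would pick linear forms $L_{0},\dots,L_{n}\in\mathcal{O}_{K}[T_{0},\dots,T_{m}]$ so that the induced rational map $\pi=(L_{0}{:}\cdots{:}L_{n})\colon Z\to\mathbb{P}^{n}$ is generically finite of some degree $\delta=\delta(Z)$, and by Bertini--Noether the same is true of its reduction modulo $\mathfrak{p}$ for all but finitely many primes. Any non-zero homogeneous $Q\in\mathcal{O}_{K}[Y_{0},\dots,Y_{n}]$ of complexity $\leq C$ then pulls back to $P:=Q(L_{0},\dots,L_{n})$, which does not vanish on $Z$ (because $\pi$ is dominant and $Q\neq 0$), has complexity at most $C$ directly from the definition, and vanishes at every $x\in X$ with $Q(\pi(x))=0$. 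Since $\pi$ has fibres of size $\leq\delta$, it therefore suffices to produce such a $Q$ vanishing on at least $(1-\varepsilon/\delta)\lvert\pi(X)\rvert$ points of $\pi(X)\subseteq\mathbb{P}^{n}(K)$, of complexity $\ll(\log N)^{\kappa/(n-\kappa)}$; the ill-distribution hypothesis transfers from $X$ to $\pi(X)$ at primes of good reduction. From now on one may assume $Z=\mathbb{P}^{n}$.

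Set $D:=\lceil C_{1}(\log N)^{\kappa/(n-\kappa)}\rceil$ for a large constant $C_{1}$ to be chosen, and let $W_{D}$ denote the free $\mathcal{O}_{K}$-module of homogeneous polynomials of degree $D$ in $Y_{0},\dots,Y_{n}$, of rank $R_{D}=\binom{D+n}{n}\asymp_{n}D^{n}$. Let $\Lambda\subseteq W_{D}$ be the sublattice of those $Q$ satisfying $Q(x)\equiv 0\pmod{\mathfrak{p}}$ for every $x\in X$ and every $\mathfrak{p}\in\mathcal{P}_{I,K}$. The hypothesis $\lvert X_{\mathfrak{p}}\rvert\ll\mathcal{N}_{K}(\mathfrak{p})^{\kappa}$ means that modulo each $\mathfrak{p}$ these are at most $O(\mathcal{N}_{K}(\mathfrak{p})^{\kappa})$ independent linear conditions on $W_{D}\otimes_{\mathcal{O}_{K}}\mathbb{F}_{\mathfrak{p}}$, so
\begin{equation*}
[W_{D}:\Lambda]\;\leq\;\prod_{\mathfrak{p}\in\mathcal{P}_{I,K}}\mathcal{N}_{K}(\mathfrak{p})^{O(\mathcal{N}_{K}(\mathfrak{p})^{\kappa})}.
\end{equation*}
The Bombieri--Vaaler Siegel lemma over $\mathcal{O}_{K}$ (and its function-field analogue) then yields a non-zero $Q\in\Lambda$ of coefficient height $\ll[W_{D}:\Lambda]^{1/R_{D}}$; a short calculation using a Chebyshev/PNT count of $\mathcal{P}_{I,K}$ (respectively the Hasse--Weil count of function-field primes of a fixed degree) shows that at the threshold $\tau(\log N)^{n/(n-\kappa)}$ this exponent is $O(D\log N)$, whence $H(Q)\leq N^{C_{2}D}$ and $Q$ has complexity $\ll D$.

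It remains to pass from congruential vanishing of $Q$ to actual vanishing on a $(1-\varepsilon)$-fraction of $X$. For any $x\in X$ with $Q(x)\neq 0$, the element $Q(x)\in\mathcal{O}_{K}$ is divisible by every $\mathfrak{p}\in\mathcal{P}_{I,K}$, hence has norm at least $\prod_{\mathfrak{p}\in\mathcal{P}_{I,K}}\mathcal{N}_{K}(\mathfrak{p})$, while \eqref{polynomialheight2} combined with \eqref{coefbound} gives $\mathcal{N}_{K}(Q(x))\ll N^{C_{3}D[K:k]}$. Summing over $x$ and comparing the two estimates bounds the number of bad $x$; choosing $\tau$ sufficiently large in terms of $C_{1},\varepsilon,n,\kappa,K,Z$ forces at most $\varepsilon\lvert X\rvert$ bad points. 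The main obstacle I anticipate is the simultaneous calibration of $C_{1}$ and $\tau$ across the three competing quantities $R_{D}$, $\sum_{\mathfrak{p}}\mathcal{N}_{K}(\mathfrak{p})^{\kappa}\log\mathcal{N}_{K}(\mathfrak{p})$, and $\sum_{\mathfrak{p}}\log\mathcal{N}_{K}(\mathfrak{p})$: these must balance so that Siegel's lemma delivers complexity $\ll D$ \emph{and} the divisibility step leaves fewer than $\varepsilon\lvert X\rvert$ bad points. This is exactly the bookkeeping Walsh performs for $K=\mathbb{Q}$, $Z=\mathbb{A}^{n}$, and it is also the reason that in the function-field case $\mathcal{P}_{I,K}$ is forced to consist of primes of a single fixed norm: only then do the prime-sum estimates come out uniformly clean.
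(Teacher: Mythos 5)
Your proposal is correct in outline, but it takes a genuinely different route from the paper at the crucial step. The paper follows Walsh's strategy: it first uses a \emph{probabilistic} argument (Proposition~\ref{characteristicproposition2}) to produce a small ``dense'' subset $\mathcal{C}\subseteq X$ of size $\leq r=\eta(\log N)^{n\kappa/(n-\kappa)}$ and a positive-proportion subset $X'\subseteq X$ such that each $\textbf{x}\in X'$ is congruent to some $\textbf{c}\in\mathcal{C}$ modulo many primes of $\mathcal{P}_{I,K}$; it then applies the Siegel-type Lemma~\ref{Siegel} to the system ``$Q(\textbf{c})=0$ for $\textbf{c}\in\tilde{\mathcal{C}}$'' (a system of $\leq r$ equations over $\mathcal{O}_{K}$), and finally transfers the vanishing from $\mathcal{C}$ to $X'$ by the norm-versus-height argument, iterating $O_{\varepsilon}(1)$ times to upgrade a $\delta$-fraction to a $(1-\varepsilon)$-fraction. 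You bypass $\mathcal{C}$ and $X'$ entirely: you impose the full set of congruence conditions directly, take $\Lambda\subseteq W_{D}$ to be the sublattice they cut out, and extract a short vector by geometry of numbers. Your index estimate $[W_{D}:\Lambda]\leq\prod_{\mathfrak{p}}\mathcal{N}_{K}(\mathfrak{p})^{|X_{\mathfrak{p}}|}$ is sound (by CRT and the fact that each point of $X_{\mathfrak{p}}$ gives one $\mathbb{F}_{\mathfrak{p}}$-linear condition on $W_{D}/\mathfrak{p}W_{D}$), and once one takes $C_{1}\asymp\tau^{(1+\kappa)/(n+1)}$ the arithmetic closes: the upper bound on $\log\mathcal{N}_{K}(Q(\textbf{x}'))$ scales like $\tau^{(1+\kappa)/(n+1)}(\log N)^{n/(n-\kappa)}$, which is beaten by the lower bound $\gg_{K}\tau(\log N)^{n/(n-\kappa)}$ precisely because $\kappa<n$. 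Two things you should make explicit before this counts as a proof: (i)~the ``Bombieri--Vaaler Siegel lemma'' as cited treats kernels of $K$-linear maps, not arbitrary full-rank sublattices of $\mathcal{O}_{K}^{R_{D}}$ defined by congruence conditions --- what you actually need is a Minkowski-type short-vector bound $H(1:\textbf{c})\ll_{K}[W_{D}:\Lambda]^{O(1/R_{D})}$, which for number fields follows from the Minkowski embedding and for function fields from a pigeonhole argument as in the paper's Lemma~\ref{Siegel}; (ii)~the congruence $Q(\textbf{x})\equiv 0\pmod{\mathfrak{p}}$ lives in $\mathbb{P}^{n}$, so you should check (using the paper's adequate-coordinates discussion and Lemma~\ref{lemita}) that this forces $\mathfrak{p}\mid Q(\textbf{x}')$ for the specific affine representative $\textbf{x}'$ you plug into \eqref{polynomialheight2}. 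What your approach buys, if the details are carried out, is noteworthy: because every $\textbf{x}\in X$ with $Q(\textbf{x})\neq 0$ leads to the same height/norm contradiction, $Q$ vanishes on \emph{all} of $X$ and the complexity constant is independent of $\varepsilon$, which is a strengthening of the stated theorem and removes the $O_{\varepsilon}(1)$-iteration step. Given that this would sharpen both the paper's Theorem~\ref{affinegeneralization} and Walsh's Theorem~\ref{WW}, you should scrutinize the constant bookkeeping (in particular that $|X_{\mathfrak{p}}|<R_{D}$ with your choice of $C_{1}$, and the Landau/function-field prime counts) with extra care before trusting it.
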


Theorem \ref{WW} of Walsh \cite{Walsh} is a consequence of Theorem \ref{affinegeneralization} for $Z=\mathbb{P}^{m}(\overline{k})$ and $K=\mathbb{Q}$, while Theorem \ref{generalization} is the case where $Z=\mathbb{P}^{m}(\overline{k})$ and $K$ a global field. In fact, the strategy of the proof of Theorem \ref{affinegeneralization} is the same as the one given in \cite{Walsh}. Namely, for any set $X$ as in Theorem \ref{affinegeneralization} we will construct a small dense set $\mathcal{C}$ in $X$; by this we mean a set of small size, and such that polynomials of low complexity that vanish in $\mathcal{C}$ also vanish at a fixed positive proportion of $X$.

To find the dense set $\mathcal{C}$, we introduce the quantity
\begin{equation}
r=\eta \left(\log(N)\right)^{\frac{n\kappa}{n-\kappa}},
\label{r}
\end{equation}
where $\eta\geq 1$ is a constant that we shall choose later. .

\begin{proposition}
Let $X\subseteq Z(K,N)$ with $|X_{\mathfrak{p}}|\leq \alpha\mathcal{N}_{K}(\mathfrak{p})^{\kappa}$ for all prime ideals $\mathfrak{p}\in \mathcal{P}_{I,K}$. There exist a positive constant $C_{1}=C_{1}(\kappa)$, sets $\mathcal{C},X'\subseteq X$ of size $|\mathcal{C}|\leq r,|X'|\gtrsim |X|$, such that if $\eta\geq C_{1}\alpha\tau^{\kappa}$, then for every ${\bf x}\in X'$ we have
\begin{equation}
\displaystyle \sum_{\mathfrak{p}\in \mathcal{P}_{I,K}}1_{\exists c\in \mathcal{C}:{\bf x}\equiv c\mod(\mathfrak{p})}\log(\mathcal{N}_{K}(\mathfrak{p}))\gtrsim_{K} |I|,
\label{promedio}
\end{equation}
where $|I|=\tau(\log(N))^{\frac{n}{n-\kappa}}$.
\label{characteristicproposition2}
\end{proposition}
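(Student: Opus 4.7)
The plan is to follow the probabilistic construction of Walsh \cite{Walsh}. The idea is to first reduce to a large subset $X_{1}\subseteq X$ of points lying in ``popular'' fibers of the reductions $\pi_{\mathfrak{p}}$, and then take $\mathcal{C}$ to be a uniform random sample from $X$ of size $r$.

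\emph{Popular fibers.} For each $\mathfrak{p}\in\mathcal{P}_{I,K}$ and $x\in X$, set $F_{\mathfrak{p}}(x)=|\pi_{\mathfrak{p}}^{-1}(\pi_{\mathfrak{p}}(x))\cap X|$ and call $x$ \emph{good for $\mathfrak{p}$} if $F_{\mathfrak{p}}(x)\geq |X|/(2\alpha\mathcal{N}_{K}(\mathfrak{p})^{\kappa})$. Since $|X_{\mathfrak{p}}|\leq \alpha\mathcal{N}_{K}(\mathfrak{p})^{\kappa}$, the bad points account for at most $|X|/2$ for each fixed $\mathfrak{p}$. Weighting the primes in $\mathcal{P}_{I,K}$ by $\log\mathcal{N}_{K}(\mathfrak{p})$, applying Fubini, and invoking the lower bound $\sum_{\mathfrak{p}\in\mathcal{P}_{I,K}}\log\mathcal{N}_{K}(\mathfrak{p})\gg_{K}|I|$ (which in the number-field case follows from the prime number theorem for ideals of $\mathcal{O}_{K}$, and in the function-field case follows from the Weil bound for curves; this forces the separate definition of $\mathcal{P}_{I,K}$ in the two settings) together with Markov's inequality produces $X_{1}\subseteq X$ with $|X_{1}|\gg|X|$ such that
\[\sum_{\mathfrak{p}\in B(x)}\log\mathcal{N}_{K}(\mathfrak{p})\gg_{K}|I|,\qquad B(x):=\{\mathfrak{p}\in\mathcal{P}_{I,K}:x\text{ is good for }\mathfrak{p}\},\]
for every $x\in X_{1}$.

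\emph{Random sampling.} Now draw $\mathcal{C}$ by selecting $r$ points from $X$ uniformly and independently. For $x\in X_{1}$ and $\mathfrak{p}\in B(x)$,
\[\Pr\bigl[\,c\not\equiv x\mod(\mathfrak{p})\text{ for all }c\in\mathcal{C}\,\bigr]\leq \bigl(1-F_{\mathfrak{p}}(x)/|X|\bigr)^{r}\leq \exp\!\bigl(-r/(2\alpha\mathcal{N}_{K}(\mathfrak{p})^{\kappa})\bigr).\]
With $r=\eta(\log N)^{n\kappa/(n-\kappa)}$ and $\mathcal{N}_{K}(\mathfrak{p})\leq 2\tau(\log N)^{n/(n-\kappa)}$, this bound is at most $\exp(-\eta/(2^{\kappa+1}\alpha\tau^{\kappa}))$, which drops below $1/2$ once $\eta\geq C_{1}\tau^{\kappa}$ for a suitable $C_{1}=C_{1}(\alpha,\kappa)$. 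Writing $S(x)$ for the left-hand side of \eqref{promedio}, each of its indicators therefore has expectation at least $1/2$, so linearity of expectation combined with the first step yields $\mathbb{E}[S(x)]\gg_{K}|I|$ for every $x\in X_{1}$.

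\emph{Extracting $X'$.} Deterministically $S(x)\leq \sum_{\mathfrak{p}\in\mathcal{P}_{I,K}}\log\mathcal{N}_{K}(\mathfrak{p})\ll_{K}|I|$, and the expectation bound $\mathbb{E}\bigl[\sum_{x\in X_{1}}S(x)\bigr]\gg_{K}|X_{1}|\cdot|I|$ combined with a reverse Markov inequality produces some realization of $\mathcal{C}$ on which a fixed positive proportion of $X_{1}$ satisfies $S(x)\gg_{K}|I|$; that proportion is the desired $X'$, still of size $\gg|X|$. The delicate points I anticipate are calibrating $\eta$ against $\tau^{\kappa}$ and $\alpha$ so that the sampling bound beats $1/2$ uniformly for $\mathfrak{p}\in\mathcal{P}_{I,K}$, and securing the lower bound $\sum_{\mathfrak{p}\in\mathcal{P}_{I,K}}\log\mathcal{N}_{K}(\mathfrak{p})\gg_{K}|I|$ in the function-field case, where $\mathcal{P}_{I,K}$ is restricted to primes of a single prescribed norm.
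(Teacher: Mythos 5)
Your proposal is correct and follows essentially the same Walsh-style probabilistic argument as the paper: both show that a random $r$-element sample $\mathcal{C}\subseteq X$ captures the residue class of most points of $X$ for a weighted positive proportion of the primes in $\mathcal{P}_{I,K}$, the calibration $\eta\geq C_{1}\tau^{\kappa}$ enters in the same place (forcing the sample to hit a popular class with probability at least $1/2$), and both rely on Landau's ideal theorem, respectively the Riemann hypothesis for curves, to lower-bound the contribution of $\mathcal{P}_{I,K}$. The difference is only a rearrangement of the bookkeeping: the paper directly bounds the probability that a uniformly random tuple in $X^{r+1}$ is bad modulo $\mathfrak{p}$ (splitting over classes of density above or below $1/r$) and then double-counts good tuples over $\mathfrak{p}$ to extract $L'$ and $X'$, whereas you first pass to the subset of popular fibers and then apply linearity of expectation and reverse Markov to the sampled $\mathcal{C}$; the computations are isomorphic.
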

Note that if $K$ is a number field, then $|I|$ is the length of the interval $I$. 
\begin{proof}
The proof is exactly the same as Proposition 3.1 of \cite{Walsh}. We include it for the sake of completeness. Call $({\bf x},L)\in X\times X^{r}$ a good tuple modulo $\mathfrak{p}$ if there exists a coordinate of $L$ such that its reduction modulo $\mathfrak{p}$ coincides with the reduction modulo $\mathfrak{p}$ of ${\bf x}$. Let us denote $X_{\text{good},\mathfrak{p}}$ for the set of good tuples modulo $\mathfrak{p}$. Our set $\mathcal{C}$ will be constructed as the set of coordinates of an $L\in X^{r}$ such that $({\bf x},L)\in X_{\text{good},\mathfrak{p}}$ for many ${\bf x}\in X$ and many primes $\mathfrak{p}\in \mathcal{P}_{I,K}$. In order to prove this, first we prove that for a fixed prime $\mathfrak{p}\in \mathcal{P}_{I,K}$ the set $X_{\text{good},\mathfrak{p}}$ is big.

For any residue class ${\bf a}$ in $Z_{\mathfrak{p}}$, let us denote $X_{{\bf a}}$ the probability of ${\bf x}\in X$ such that ${\bf x}\equiv {\bf a}\mod(\mathfrak{p})$. To find many $({\bf x},L)$ which are good modulo $\mathfrak{p}$ it is enough to show that the probability of a tuple $({\bf x},L)$ not being good modulo $\mathfrak{p}$ is small. In other words, it is enough to give an upper bound $c<1$ for
\begin{equation}
\displaystyle \sum_{{\bf a}\in Z_{\mathfrak{p}}}X_{{\bf a}}(1-X_{{\bf a}})^{r}.
\label{proba}
\end{equation} 
If we sum over the ${\bf a}$'s  such that $X_{{\bf a}}>1/r$, then we get the upper bound $(1-1/r)^{r}$. Since this quantity approaches $e^{-1}$ as $r\rightarrow +\infty$, if $N$ is large enough (depending on $n$ and $\kappa$), we have
\begin{equation}
\displaystyle \sum_{{\bf a}\in Z_{\mathfrak{p}}:X_{{\bf a}}>1/r}X_{{\bf a}}(1-X_{{\bf a}})^{r}\leq c_{1}<1
\label{proba1}
\end{equation}
for some positive constant $c_{1}$. Now, if we sum over the ${\bf a}$'s such that $X_{{\bf a}}\leq 1/r$, then, taking into account that $X_{\mathfrak{p}}$ has at most $\alpha\mathcal{N}_{K}(\mathfrak{p})^{\kappa}$ elements for all $\mathfrak{p}\in \mathcal{P}_{I,K}$, we have
\begin{align}
\displaystyle \sum_{{\bf a}\in Z_{\mathfrak{p}}:0<X_{{\bf a}}\leq 1/r}X_{{\bf a}}(1-X_{{\bf a}})^{r} & \leq \displaystyle \sum_{{\bf a}\in Z_{\mathfrak{p}},0<X_{{\bf a}}\leq 1/r}\dfrac{1}{r}\cdot 1\leq \dfrac{\alpha}{r}\mathcal{N}_{K}(\mathfrak{p})^{\kappa} \label{proba2}\\ & \leq \dfrac{\alpha \left(2\tau(\log(N))^{\frac{n}{n-\kappa}}\right)^{\kappa}}{\eta(\log(N))^{\frac{n\kappa}{n-\kappa}}}\leq \dfrac{2^{\kappa}\alpha\tau^{\kappa}}{\eta}\leq \dfrac{1-c_{1}}{2},
\end{align}
where the last inequality can be achieved if we impose the condition 
\begin{equation}
\eta\geq C_{1}\alpha\tau^{\kappa}
\label{condicionA}
\end{equation}
for some explicit constant $C_{1}$, depending on $\kappa$. Combining \eqref{proba1} and \eqref{proba2} we get the upper bound $c=:c_{1}+\frac{1-c_{1}}{2}<1$ for \eqref{proba}, so there exists at least $(1-c)|X|^{r+1}$ tuples $({\bf x},L)\in X\times X^{r}$ which are good modulo $\mathfrak{p}$. In other words, $|X_{\text{good},\mathfrak{p}}|\geq (1-c)|X|^{r+1}$. Note that the constant $c$ is effective, and independent of $\mathfrak{p}$. 

From the fact that $|X_{\text{good},\mathfrak{p}}|\geq (1-c)|X|^{r+1}$, it follows that: 
\begin{fact}
For every prime ideal $\mathfrak{p}\in \mathcal{P}_{I,K}$ there exist absolute constants $c_{1}$ and $c_{2}$, both independent of $\mathfrak{p}$, such that for at least $c_{1}|X|^{r}$ choices of $L\in X^{r}$, there are at least $c_{2}|X|$ elements ${\bf x}\in X$ for which $({\bf x},L)\in X_{{\rm good},\mathfrak{p}}$.
\label{fact1}
\end{fact}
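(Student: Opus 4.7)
The plan is to derive Fact 1 from the lower bound $|X_{\text{good},\mathfrak{p}}|\geq(1-c)|X|^{r+1}$ established in the preceding paragraph by a standard Markov-type manipulation. Fix a prime $\mathfrak{p}\in \mathcal{P}_{I,K}$, and for each $L\in X^{r}$ define the slice count
$$N(L):=|\{\textbf{x}\in X:(\textbf{x},L)\in X_{\text{good},\mathfrak{p}}\}|.$$
By Fubini on the product $X\times X^{r}$ one has $\sum_{L\in X^{r}}N(L)=|X_{\text{good},\mathfrak{p}}|\geq(1-c)|X|^{r+1}$, while the trivial pointwise bound is $N(L)\leq|X|$ for every $L$.

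Next I would introduce the level set $A:=\{L\in X^{r}:N(L)\geq c_{2}|X|\}$ for a constant $c_{2}\in(0,1-c)$ to be chosen, and split the total according to $A$:
$$(1-c)|X|^{r+1}\leq\sum_{L\in A}N(L)+\sum_{L\notin A}N(L)\leq|A|\cdot|X|+c_{2}|X|\cdot(|X|^{r}-|A|).$$
Rearranging gives $|A|\geq\frac{1-c-c_{2}}{1-c_{2}}|X|^{r}$, and the explicit choice $c_{2}:=(1-c)/2$ produces $|A|\geq c_{1}|X|^{r}$ for $c_{1}:=(1-c)/(1+c)>0$. By construction every $L\in A$ has at least $c_{2}|X|$ good partners $\textbf{x}\in X$, which is precisely the content of Fact 1.

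Since the step is elementary counting, there is no real obstacle. The only subtlety to verify is that the constant $c$ output by the preceding estimates \eqref{proba1} and \eqref{proba2} is genuinely independent of the prime $\mathfrak{p}$, so that the resulting $c_{1},c_{2}$ are also independent of $\mathfrak{p}$. This follows at once from inspecting those two bounds: the contribution \eqref{proba1} depends only on $r$ being large enough (and hence on $N$ being large in terms of $n,\kappa$), whereas \eqref{proba2} depends only on $\alpha,\kappa,\tau,\eta$ via the normalization \eqref{condicionA}, and crucially not on the specific prime $\mathfrak{p}\in\mathcal{P}_{I,K}$. Thus both constants $c_{1},c_{2}$ are absolute in $\mathfrak{p}$, as required.
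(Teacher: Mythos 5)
Your argument is correct and follows essentially the same slice-counting approach as the paper: both apply Fubini to $X\times X^{r}$ starting from $|X_{\text{good},\mathfrak{p}}|\geq(1-c)|X|^{r+1}$ and split the sum over the level set $\{L\in X^{r}:N(L)\geq c_{2}|X|\}$ and its complement, the paper phrasing this as a proof by contradiction with unspecified small constants while you carry out the rearrangement directly and produce explicit values $c_{2}=(1-c)/2$, $c_{1}=(1-c)/(1+c)$. Your final remark that $c$ (hence $c_{1},c_{2}$) is independent of $\mathfrak{p}$ because the bounds in \eqref{proba1} and \eqref{proba2} are uniform over $\mathcal{P}_{I,K}$ is exactly the point that justifies the word ``absolute'' in the statement.
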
 
Indeed, suppose that this does fail. Then, for some $\mathfrak{p}\in \mathcal{P}_{I,K}$ and for all positive constants $c_{1},c_{2}$, we have at most $c_{1}|X|^{r}$ choices for $L\in X^{r}$ such that there exist at least $c_{2}|X|$ elements of ${\bf x}\in X$ for which $({\bf x},L)\in X_{\text{good},\mathfrak{p}}$. Call $\mathcal{L}$ the set of $L\in X^{r}$ such that $({\bf x},L)\in X_{\text{good},\mathfrak{p}}$ for at least $c_{2}|X|$ elements of ${\bf x}\in X$. Then $\mathcal{L}$ has at most $c_{1}|X|^{r}$ elements. Recalling that we already proved the lower bound $|X_{\text{good},\mathfrak{p}}|\geq (1-c)|X|^{r+1}$, we have:
\begin{align*}
(1-c)|X|^{r+1} & \leq |X_{\text{good},\mathfrak{p}}|\\ & \leq |\{(\boldsymbol x,L)\in X_{\text{good},\mathfrak{p}}:L\in \mathcal{L}\}|+|\{(\boldsymbol x,L)\in X_{\text{good},\mathfrak{p}}:L\notin \mathcal{L}\}|\\ & \leq c_{1}|X|^{r+1}+(1-c_{1})c_{2}|X|^{r+1}.
\end{align*}
Taking $c_{1}$ and $c_{2}$ sufficiently small enough we arrive to a contradiction.

We say that an element $L\in X^{r}$ is good modulo $\mathfrak{p}$ if $({\bf x},L)$ is good modulo $\mathfrak{p}$ for at least $c_{2}|X|$ elements ${\bf x}\in X$. Let us denote $\mathcal{L}_{\mathfrak{p}}$ for the set of such $L$'s. Fact \ref{fact1} implies that for every prime ideal $\mathfrak{p}\in \mathcal{P}_{I,K}$ we have $|\mathcal{L}_{\mathfrak{p}}|\geq c_{1}|X|^{r}$, therefore we have
\begin{center}
$\displaystyle \sum_{\mathfrak{p}\in \mathcal{P}_{I,K}}|\mathcal{L}_{\mathfrak{p}}|\geq c_{1}|X|^{r}|\mathcal{P}_{I,K}|$.
\end{center}
It follows that there exists some $L'\in X^{r}$ such that $L'\in \mathcal{L}_{\mathfrak{p}}$ for at least $c_{1}|\mathcal{P}_{I,K}|$ prime ideals in $\mathcal{P}_{I,K}$. By construction, we have
\begin{equation}
\displaystyle\sum_{{\bf x}\in X}\left\vert \left\{ \mathfrak{p}\in \mathcal{P}_{I,K}:({\bf x},L')\in X_{\text{good},\mathfrak{p}} \right\} \right\vert\geq c_{1}c_{2}|X||\mathcal{P}_{I,K}|.
\label{eqq}
\end{equation}
We conclude that there exist positive constants $c_{3},c_{4}$ and a subset $X'\subseteq X$ of size $|X'|\geq c_{3}|X|$, such that for every ${\bf x}\in X'$ there are at least $c_{4}|\mathcal{P}_{I,K}|$ prime ideals $\mathfrak{p}\in \mathcal{P}_{I,K}$ for which $({\bf x},L')\in X_{\text{good},\mathfrak{p}}$. 

Take $\mathcal{C}\subseteq X$ to be the set of coordinates of $L'$, so $|\mathcal{C}|$ has at most $r$ elements.  Since $\mathcal{N}_{K}(\mathfrak{p})\geq |I|=\tau(\log(N))^{\frac{n}{n-\kappa}}$, we have that for every ${\bf x}\in X'$ it must be
\begin{equation}
\displaystyle \sum_{\mathfrak{p}\in \mathcal{P}_{I,K}}1_{\exists {\bf c}\in \mathcal{C}:{\bf x}\equiv {\bf c}\mod(\mathfrak{p})}\log(\mathcal{N}_{K}(\mathfrak{p}))\geq c_{4}|\mathcal{P}_{I,K}|\log(|I|).
\label{eqq1}
\end{equation}
If $K$ is a number field, we use the Landau Ideal Theorem to obtain $|\mathcal{P}_{I,K}|\sim_{K}\dfrac{|I|}{\log(|I|)}$. Replacing this bound in \eqref{eqq1} we conclude Proposition \ref{characteristicproposition2}. Suppose now that $K$ is a function field over $\mathbb{F}_{q}$. If $\pi_{K}(n)$ denotes the primes of $K$ of degree $n$, then the Riemann Hypothesis for curves over finite fields implies (see \cite[Theorem 5.12]{Rosen})
\begin{equation}
\pi_{K}(n)=\dfrac{q^{n}}{n}+O_{g}\left(\dfrac{q^{\frac{n}{2}}}{n}\right),
\label{rhforfunctionfields}
\end{equation}
where $g$ is the genus of $K$. Now, there may be primes lying at infinite that are being counted in $\pi_{K}(n)$, but since the degree of these primes is bounded by $[K:k]$ (use \cite[Proposition 1.1.15]{Stick} and the fact that any prime at infinite contains $1/T$), taking $n>[K:k]$ the number $\pi_{K}(n)$ counts only prime ideals of $\mathcal{O}_{K}$ of degree $n$. Recalling that $\mathcal{P}_{I,K}$ consists of primes of degree $H=\log_{q}(\tau \log(N)^{\frac{n}{n-\kappa}})=\log_{q}(|I|)$, we have
\begin{center}
$|\mathcal{P}_{I,K}|=\pi_{K}(H)=\dfrac{|I|}{\log_{q}(|I|)}+O_{g}\left(\dfrac{|I|^{\frac{1}{2}}}{\log_{q}(|I|)}\right)\gtrsim_{g,q}\dfrac{|I|}{\log|I|}$.
\end{center}
Replacing in \eqref{eqq1} we deduce Proposition \ref{characteristicproposition2}.
\end{proof}

\begin{remark}
The constant $C_{1}$ in Proposition \ref{characteristicproposition2} is effective. The proof shows that if we write $|X'|=\delta |X|$, then $\delta\geq c_{3}$ and $c_{3}$ is  an effective absolute constant. 

The implicit constant in \eqref{promedio} is effective if $K$ is a function field, since the implicit constant in the Riemann Hypothesis \eqref{rhforfunctionfields} is effective. If $K$ is a number field, this constant can be made explicit using an effective version of Landau's Ideal Theorem.\end{remark}

Having constructed the sets $\mathcal{C}$ and $X'$ of Proposition \ref{characteristicproposition2}, the next step is to construct a non-zero homogeneous polynomial $P\in \mathcal{O}_{K}[T_{0},\ldots ,T_{m}]$ of low complexity that vanishes at $\mathcal{C}$ and it is non-zero at $Z$. After this is done, we will show that such polynomial also vanishes at $X'$. Since $|X'|=\delta|X|$ for some $\delta>0$, this will allow us to conclude that $P$ vanishes on at least $\delta|X|$ points on $X$, concluding Theorem \ref{affinegeneralization} for $\varepsilon=\delta$. Theorem \ref{affinegeneralization} then follows upon $O_{\varepsilon}(1)$ iterations of this result.

To construct a polynomial of low complexity vanishing at $\mathcal{C}$ we will use the following version of Siegel's lemma, that includes both the number field and function field cases. We note that for number fields, we could use the result of Bombieri-Vaaler \cite[Corollary 11]{Bombieri2}, or even a more elementary result as \cite[Corollary 2.9.2.]{Bombieri}. For a lack of reference for the function field case, we provide a proof valid for both cases.

\begin{lemma}
Let $K$ be a global field with $[K:k]=d$. Let $(a_{ij})_{i,j}$, $1\leq i\leq s$, $1\leq j\leq t$ be elements of $\mathcal{O}_{K}$ with $H(a_{ij})\leq C$ for all $i,j$. Let us suppose that $t>2d^{2}s$. Then, there exists ${\bf c}=(c_{1},\ldots ,c_{t})\in \mathcal{O}_{K}^{t}\backslash\{0\}$, such that 
\begin{equation}
H(1:{\bf c})\lesssim_{K} (tC)^{\frac{4d^{2}s}{t-2d^{2}s}}
\label{con1}
\end{equation}
and
\begin{equation}
\displaystyle \sum_{j=1}^{t}c_{j}a_{ij}=0\; \text{for all}\;1\leq i\leq s.
\label{con2}
\end{equation}
\label{Siegel}
\end{lemma}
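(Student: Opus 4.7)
The plan is to establish this Siegel-type lemma via the classical Thue--Siegel pigeonhole argument, adapted to treat both number fields and function fields uniformly. I would fix a parameter $B>0$ to be chosen at the end, and consider the box
\[
\mathcal{B}_B := \{\mathbf{c} \in \mathcal{O}_K^t : H(c_j) \le B \text{ for all } j\}
\]
together with the $\mathcal{O}_K$-linear map $\Phi: \mathcal{O}_K^t \to \mathcal{O}_K^s$ given by $\Phi(\mathbf{c})_i = \sum_{j=1}^t a_{ij} c_j$. The strategy is to show via pigeonhole that, for suitable $B$, there are two distinct tuples $\mathbf{c},\mathbf{c}' \in \mathcal{B}_B$ with $\Phi(\mathbf{c}) = \Phi(\mathbf{c}')$; their difference $\mathbf{c}'' := \mathbf{c} - \mathbf{c}'$ then satisfies $\sum_j a_{ij} c_j'' = 0$ and furnishes the desired nonzero element.

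The argument rests on three estimates. First, a lower bound $|\mathcal{B}_B| \gg_K B^{dt}$: using an integral basis $\omega_1,\ldots,\omega_d$ of $\mathcal{O}_K$ over $\mathcal{O}_k$, any $x = \sum_l \alpha_l \omega_l$ with $H(\alpha_l) \ll_K B$ satisfies $H(x) \le B$, and for each $\alpha_l \in \mathcal{O}_k$ (which is $\mathbb{Z}$ or $\mathbb{F}_q[T]$) there are $\gg B$ admissible choices. Second, the standard height inequalities $H(xy) \le H(x)H(y)$ and $H(\sum_{j=1}^t z_j) \ll_K t \max_j H(z_j)$ (the latter an equality with $\max$ in the function field case by non-archimedeanness) yield $H(\Phi(\mathbf{c})_i) \ll_K tBC$ for $\mathbf{c} \in \mathcal{B}_B$. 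Third, the crude upper bound $\#\{y \in \mathcal{O}_K : H(y) \le M\} \ll_K M^{d^2}$, obtained by writing $y = \sum_l \beta_l \omega_l$ in the integral basis and using the conjugate estimate $|\beta_l| \ll_K \max_v |\sigma_v(y)| \ll_K H(y)^d$ together with a box count in $\mathcal{O}_k^d$.

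Combining these, the image $\Phi(\mathcal{B}_B)$ is contained in a set of cardinality $\ll_K (tBC)^{sd^2}$, so pigeonhole produces a nontrivial collision whenever $B^{dt} \gg_K (tBC)^{sd^2}$. Solving this inequality for $B$ shows that it is achievable precisely when $t > 2d^2 s$ (after absorbing the various $K$-dependent constants and the factor of $2$ needed to bound $H(c_j - c_j')$ in the number field case), with $B$ of the required order. Applying \eqref{inequality of heights} yields
\[
H(1:\mathbf{c}'') \le \max_j H(c_j'')^d \ll_K B^d \ll_K (tC^d)^{4d^2 s / (t - 2d^2 s)},
\]
which is the bound claimed in \eqref{con1}.

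The main obstacle is the precise accounting of the powers of $d$ and the implicit constants so that they yield exactly the exponent stated in \eqref{con1}; in particular, one must be careful that the upper bound on the image count uses a $d^2$-exponent (from the worst-case conjugate estimate), while the lower bound on $|\mathcal{B}_B|$ uses only $d$, which is what produces the asymmetry visible in the final exponent. A secondary technical point is the uniform handling of both base fields: counting elements of $\mathcal{O}_K$ of bounded height uses Minkowski's geometry of numbers in the number field case and a Riemann--Roch computation in the function field case, but in both cases the same $B^{dt}$ versus $M^{sd^2}$ comparison drives the argument, so the final bound is genuinely uniform in the global field $K$.
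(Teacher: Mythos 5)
Your overall approach matches the paper: a Thue--Siegel pigeonhole argument, with a parameter $B$, a box of candidate tuples, a crude bound on the size of the image under the linear map $\Phi$, and a collision. The paper runs the identical scheme with a parameter $h$, lower-bounding the box by the cruder $A_{\mathcal{O}_k}(h)^t\gg h^t$ and upper-bounding images using $A_{\mathcal{O}_K}(M)\le A_K(M)\sim_K M^{2d}$ (Schanuel for number fields, Wan for function fields), where your route uses an integral-basis count $\gg_K B^{dt}$ for the box and a conjugate estimate $\ll_K M^{d^2}$ for the fibres. Those are legitimate alternatives.

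There is, however, a concrete error in your second key estimate. The inequality $H\bigl(\sum_{j=1}^t z_j\bigr)\ll_K t\max_j H(z_j)$ is false over a general global field, and the parenthetical justification for the function field case does not work: non-archimedeanness gives $\|\sum_j z_j\|_w \le \max_j\|z_j\|_w$ \emph{at each place} $w$, but the place-by-place maximizer varies with $w$, so the product over places does not collapse to a single $\max_j H(z_j)$. What one actually gets, for $z_j\in\mathcal{O}_K$ with $H(z_j)\le M$, is roughly $H(\sum_j z_j)\le t\,M^{|M_{K,\infty}|}\le t\,M^{d}$ --- an extra power of $d$. This is precisely why the paper's proof passes through the projective-height inequalities \eqref{polynomialheight} and \eqref{inequality of heights}, which produce $H(\sum_j c_j a_{ij})\le t(hC)^d$ rather than $t\,hC$.

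This missing $d$ propagates: with the stated estimates your pigeonhole condition $B^{dt}\gg_K(tBC)^{sd^2}$ only requires $t>sd$, and solving it does not produce the exponent $\tfrac{4d^2s}{t-2d^2s}$; you assert that the numerology ``comes out right'' but do not carry it through, and as written it does not. If one substitutes the corrected height bound $t(BC)^d$ and your counting bounds, the bookkeeping does close (indeed it yields a slightly sharper exponent, $\tfrac{2d^2s}{t-d^2s}$, under the weaker hypothesis $t>d^2s$), but that calculation needs to be done explicitly rather than claimed. So: right strategy, genuine gap in the height-of-a-sum estimate and in the final exponent verification.
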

\begin{proof}
Let $h\geq 1$ be a parameter to be chosen later. Let $A_{\mathcal{O}_{K}}(h)$ and $A_{K}(h)$ be, respectively, the number of points in $\mathcal{O}_{K}$ and $K$ of height at most $h$. There are $(A_{\mathcal{O}_{K}}(h))^{t}$ $t$-tuples $(c_{1},\ldots ,c_{t})$ with $H(c_{i})\leq h$ for all $1\leq i\leq t$. For any such choice, \eqref{polynomialheight}  and \eqref{inequality of heights} imply
\begin{equation}
H\left( \displaystyle \sum_{j=1}^{t}c_{j}a_{ij} \right)\leq tH(1:{\bf c})H(1:a_{i1}:\ldots :a_{it})\leq t(hC)^{d}.
\end{equation}
Then, there are $(A_{\mathcal{O}_{K}}(t(hC)^{d}))^{s}$ possible configurations for all the sums $\sum_{j=1}^{t}c_{j}a_{ij}$. If 
\begin{equation}
(A_{\mathcal{O}_{K}}(h))^{t}>(A_{\mathcal{O}_{K}}(t(hC)^{d}))^{s},
\label{eq0}
\end{equation}
then there exists two tuples ${\bf c}_{1},{\bf c}_{2}\in \mathcal{O}_{K}^{t}\backslash\{0\}$, ${\bf c}_{1}=(c^{(1)}_{1},\ldots ,c^{(1)}_{t}), {\bf c}_{2}=(c^{(2)}_{1},\ldots ,c^{(2)}_{t})$, ${\bf c}_{1}\neq {\bf c}_{2}$ such that
\begin{equation}
\displaystyle \sum_{j=1}^{t}c^{(1)}_{j}a_{ij}=\displaystyle \sum_{j=1}^{t}c^{(2)}_{j}a_{ij}\;\text{for all}\;1\leq i\leq s.
\label{eq1}
\end{equation}
Note that ${\bf c}={\bf c}_{1}-{\bf c}_{2}$ satisfies \eqref{con2}. Since $H(x+y)\leq 2H(x)H(y)$ for all $x,y\in \overline{k}$, we have $H(c_{j}^{(1)}-c_{j}^{(2)})\leq 2h^{2}$ for all $1\leq j\leq t$. Then inequality  \eqref{inequality of heights} gives the bound $H(1:{\bf c})\leq (2h^{2})^{d}$. We will see that there exists an adequate $h$ such that \eqref{eq0} holds, and that for this choice of $h$, ${\bf c}$ satisfies \eqref{con1}. Note that
\begin{equation}
(A_{\mathcal{O}_{K}}(h))^{t}\geq (A_{\mathcal{O}_{k}}(h))^{t}\text{ and }\;(A_{K}(t(hC)^{d}))^{s}>(A_{\mathcal{O}_{K}}(t(hC)^{d}))^{s}.
\label{eq2}
\end{equation}
It is easy to see that $A_{\mathcal{O}_{k}}(h)\sim_{k} h$. Now, if $K$ is a number field, Schanuel's theorem \cite{Schanuel} says that $A_{K}(h)\sim_{K} h^{2d}$. If $K$ is a function field over $\mathbb{F}_{q}$, recall that the height of a point $x\in K$ is of the form $H(x)=q^{\frac{l}{d}}$ for some positive integer $l$. By \cite[Corollary 4.3]{Wan}, the number of points $x\in K$ with height $H(x)=q^{\frac{l}{d}}$ is $\sim_{K} q^{2l}$. This implies that $A_{K}(h)\sim_{K} h^{2d}$. Thus, choosing $h$ such that
\begin{equation}
h^{t}\gtrsim_{K} (t(hC)^{d})^{2ds},
\label{eq3}
\end{equation}
namely
\begin{equation}
h\sim_{K}(tC^{d})^{\frac{2ds}{t-2d^{2}s}},
\end{equation}
then $(A_{\mathcal{O}_{k}}(h))^{t}\geq (A_{K}(t(hC)^{d}))^{s}$. This, together with \eqref{eq2}, imply \eqref{eq0} and \eqref{con1}.
\end{proof}

For us to use Lemma \ref{Siegel} we need to have ``small coordinates'' for the points of the set $\mathcal{C}$. This is possible because of the following lemma.

\begin{lemma}[{See \cite[$\S$ 13.4]{Serre}, \cite[Proposition 2.1]{P2}}]
Let $K$ be a global field and let $n\geq 1$ be an integer. There exists a positive constant $c=c(K,n)$ such that for all ${\bf y}\in \mathbb{P}^{n}(K)$ there are coordinates $(y_{0},\ldots ,y_{n})\in \mathcal{O}_{K}^{n+1}$ such that
$$H(1:y_{0}:\ldots :y_{n})\leq cH({\bf y}).$$
\label{lemita}
Moreover, the constant $c$ is effectively computable. Thus, a subset $S\subseteq [N]_{\mathbb{P}^{n}(K)}$ can be lifted to a subset $\overline{S}\subseteq [cN]_{\mathbb{A}^{n+1}(\mathcal{O}_{K})}$.
\end{lemma}

We can now start the proof of Theorem \ref{affinegeneralization}  
 
\begin{proof}[Proof of Theorem \ref{affinegeneralization}]
Let $\mathcal{C}\subseteq X$ and $X'$ be the sets of Proposition \ref{characteristicproposition2}. As we have already explained, the first step is to construct a polynomial of low complexity that vanishes at $\mathcal{C}$, by means of Lemma \ref{Siegel}. If $Z=\mathbb{P}^{m}(\overline{k})$, to find a non-zero homogeneous $P\in \mathcal{O}_{K}[T_{0},\ldots ,T_{m}]$ of degree $D$, that vanishes at $\mathcal{C}$ amounts to solving a linear system of equations $A\cdot {\bf c}=0$. Hence, we can use Lemma \ref{Siegel} to find a non-zero polynomial of degree $D$ such that its coefficients have small height. However, in the case $Z\subsetneq \mathbb{P}^{m}(\overline{k})$, if we apply Lemma \ref{Siegel} directly, we would find a non-zero homogeneous polynomial $P\in \mathcal{O}_{K}[T_{0},\ldots ,T_{m}]$ of low complexity in $[N]_{\mathcal{O}_{K}}^{m+1}$, vanishing at $\mathcal{C}$, but it may happen that $P$ is identically zero at $Z$. To avoid this difficulty, we will find new variables $Y_{0},\ldots ,Y_{n}$ which are algebraically independent over $\overline{k}$, and then apply Lemma \ref{Siegel} to find a polynomial in this new set of variables. 

In order to find the new variables, we consider a dominant morphism ${\bf F}=(F_{0}:\ldots :F_{n}):Z\rightarrow \mathbb{P}^{n}$, with $F_{i}\in \mathcal{O}_{K}[T_{0},\ldots, T_{m}]$ for all $i$. Furthermore, using Noether's normalization and the facts that $k$ is infinite and $Z$ geometrically irreducible, we take ${\bf F}$ to be a finite morphism where each $F_{i}$ is a linear form with coefficients of height bounded by $\lesssim_{Z}1$. For each $i$, denote $Y_{i}:=F_{i}(T_{0},\ldots ,T_{m})$. Let $\tilde{\mathcal{C}}\subseteq \mathbb{P}^{n}(K)$ denote the image of $\mathcal{C}\subseteq \mathbb{P}^{m}(K)$ under ${\bf F}$. Note that $|\tilde{\mathcal{C}}|\leq |\mathcal{C}|$. If ${\bf x}\in [N]_{\mathbb{P}^{m}(K)}$, inequality \eqref{polynomialheight} gives

\begin{equation}
H(F_{0}({\bf x}),\ldots ,F_{n}({\bf x}))\leq c(Z)H({\bf x})\leq c(Z)N,
\label{polylineal}
\end{equation}
where $c(Z)$ is a constant dependent only on $Z$. Denote $N_{1}=c(Z)N$. Inequality \eqref{polylineal} means that $\tilde{\mathcal{C}}\subseteq [N_{1}]_{\mathbb{P}^{n}(K)}$. Now, let $D>0$ be an integer that we shall choose later, and let $\mathcal{R}$ the set of monomials of degree $D$ in $Y_{0},\ldots ,Y_{n}$. Then $R:=|\mathcal{R}|={D+n\choose n}$. This is also the number of $n$-tuples $(i_{0},\ldots ,i_{n})\in \mathbb{N}_{0}^{n}$ with $i_{0}+\cdots +i_{n}=D$. If ${\bf y}\in \tilde{\mathcal{C}}$, choose coordinates $(y_{0}:\ldots : y_{n})$ as in Lemma \ref{lemita}. Since ${\bf y}\in [N_{1}]_{\mathbb{P}^{n}(K)}$, from Lemma \ref{lemita} we conclude that 
\begin{center}
$H(1:y_{0}:\ldots :y_{n})\leq cN_{1}.$ 
\end{center}
For such coordinates of ${\bf y}$, and $I=(i_{0},\ldots ,i_{n})\in \mathcal{R}$, let us denote ${\bf y}^{I}:=y_{0}^{i_{0}}\cdots y_{n}^{i_{n}}$. Then $A:=({\bf y}^{I})_{{\bf y}\in \tilde{\mathcal{C}},I\in \mathcal{R}}$ is a $|\tilde{\mathcal{C}}|\times R$-matrix with entries in $\mathcal{O}_{K}$. Also, because of \eqref{polynomialheight2} and the choice of coordinates of ${\bf y}$, we have
\begin{center}
$H({\bf y}^{I})\leq H(1:y_{0}:\ldots :y_{n})^{D}\leq (cN_{1})^{D}$.
\end{center}
Now, choose $D$ such that the next inequalities hold:
\begin{equation}
((2d^{2}+1)n!|\tilde{\mathcal{C}}|)^{\frac{1}{n}}\leq ((2d^{2}+1)n!|\mathcal{C}|)^{\frac{1}{n}}\leq ((2d^{2}+1)n!r)^{\frac{1}{n}}<D\lesssim_{n,d}r^{\frac{1}{n}}.
\label{condition1}
\end{equation}
The inequality $D>((2d^{2}+1)n!r)^{\frac{1}{n}}$ gives $R={D+n\choose n}>(2d^{2}+1)r$. Moreover, this implies
\begin{equation}
\dfrac{r}{{D+n\choose n}-2d^{2}r}\leq 1.
\label{condition2}
\end{equation}
The inequality $D\lesssim_{n,d}r^{\frac{1}{n}}$ gives 
\begin{equation}
R={D+n\choose n}\lesssim_{n,d} r 
\label{upper bound for R}
\end{equation}
for $r$ large enough, and thus, for $N$ large enough. Since $|\tilde{\mathcal{C}}|\leq |\mathcal{C}|\leq r<R$, the $K$-subspace of solutions of the equation $A\cdot {\bf y}=0$ has positive dimension, thus we can apply Lemma \ref{Siegel} and \eqref{condition2} to obtain a non-zero solution ${\bf c}=(c_{I})_{I\in \mathcal{R}}\in \mathcal{O}_{K}^{R}$ such that
\begin{equation}
H(1:{\bf c})\lesssim_{K} \left(R(cN_{1})^{D}\right)^{\frac{4d^{2}|\tilde{\mathcal{C}}|}{R-2d^{2}|\tilde{\mathcal{C}}|}}\lesssim_{K}(R(cN_{1})^{D})^{4d^{2}}.
\label{coef complex}
\end{equation}
The solution ${\bf c}=(c_{I})_{I\in \mathcal{R}}$  gives a non-zero homogeneous polynomial $P(Y)=\sum_{I\in \mathcal{R}}c_{I}Y^{I}$ of degree $D$, that vanishes on $\tilde{\mathcal{C}}$, and such that the coefficients verify the bound \eqref{coef complex}. Using the bounds $D\lesssim_{n}r^{1/n}$ and \eqref{upper bound for R}, we conclude
\begin{equation}
H(1:{\bf c})\lesssim_{K, n,m}r^{4d^{2}}N_{1}^{c(K,n)r^{\frac{1}{n}}}
\end{equation}
for some positive constant $c(K,n)$ dependent on $K$ and $n$. Taking $N$ sufficiently large enough, depending on $\kappa,n,m,K,Z$, from the above inequality we deduce
\begin{equation}
H(1:{\bf c})\lesssim_{K,n,m,\kappa}N_{1}^{c'(K,n)r^{\frac{1}{n}}}
\label{polycoef}
\end{equation}
for some positive constant $c'(K,n)$ dependent on $K$ and $n$. Since $N_{1}=c(Z)N$, we conclude that the polynomial $P$ is non-zero, has coefficients in $\mathcal{O}_{K}$, vanishes at $\tilde{\mathcal{C}}$, and moreover it has complexity $\lesssim_{K,n,m,\kappa}\log(c(Z))r^{\frac{1}{n}}=\eta^{\frac{1}{n}}\log(c(Z))(\log(N_{1}))^{\frac{\kappa}{n-\kappa}}$ in $[N]_{\mathcal{O}_{K}}^{n+1}$, where the last inequality is by Definition \ref{r}. Recalling that $Y_{i}=F_{i}(T_{0},\ldots, T_{m})$ is a linear polynomial with \mbox{coefficients} in $\mathcal{O}_{K}$, we conclude that the polynomial 
$$Q=P(F_{0}(T_{0},\ldots ,T_{m}),\ldots ,F_{n}(T_{0},\ldots ,T_{m}))$$
is a non-zero polynomial with coefficients in $\mathcal{O}_{K}$, it vanishes at $\mathcal{C}$, it is non-identically zero at $Z$, and has complexity $\lesssim_{K,n,m,\kappa}\eta^{\frac{1}{n}}\log(c(Z))(\log(N))^{\frac{\kappa}{n-\kappa}}$ in $Z(K,N)$.

Now, we want to prove that $Q$ vanishes in the larger set $X'$ of Proposition \ref{characteristicproposition2}. This will be implied by the vanishing of $Q$ at $\mathcal{C}$ upon choosing adequate constants $\eta,\tau$. For this to be done, we will need to have a control of the size of the image of the polynomial $Q$ in $X'$. Note that since $H(Q({\bf x}))$ depends on the representation ${\bf x}=(x_{0}:\ldots :x_{n})$, we need to choose adequate coordinates. This is done in the following way. Given ${\bf x}\in [N]_{\mathbb{P}^{m}(K)}$, choose coordinates $(x_{0}:\ldots :x_{m})$ as in Lemma \ref{lemita}, and denote ${\bf x}'=(x_{0},\ldots ,x_{m})$ the corresponding affine point. Now, define ${\bf y}'$ to be the affine point ${\bf F}({\bf x}')=(F_{0}({\bf x}'),\ldots ,F_{n}({\bf x}'))$. The choice of coordinates of ${\bf x}$ and \eqref{polynomialheight2} give 
\begin{equation}
H(1:{\bf y}')\leq c(Z) H(1:{\bf x})\leq c(Z)c N=cN_{1}.
\label{poly}
\end{equation}
Applying \eqref{polynomialheight2} to our polynomial $Q$ at the point ${\bf x}'$, and using \eqref{polycoef} and \eqref{poly} we obtain
\begin{equation}
H(Q({\bf x}'))=H(P({\bf y}'))\leq R H(1:{\bf c})H(1:{\bf y}')^{D}\lesssim_{K,n,m,\kappa}N_{1}^{c''(K,n)r^{\frac{1}{n}}}
\label{polycoef2}
\end{equation}
for some positive constant $c''(K,n)$ dependent on $K$ and $n$. In particular, because of \eqref{coefbound}, we have for any ${\bf x}\in [N]_{\mathbb{P}^{m}(K)}$,
\begin{center}
$\log(\mathcal{N}_{K}(Q({\bf x}')))\lesssim_{K,n,m,\kappa}r^{\frac{1}{n}}\log(N_{1})\lesssim_{K,n,m,\kappa}\eta^{\frac{1}{n}}\log(c(Z))\left(\log(N)\right)^{\frac{n}{n-\kappa}}$.
\end{center}
If ${\bf x}\in X' $ and $Q({\bf x})\neq 0$, then $Q({\bf x}')\neq 0$ and we have
\begin{align}
\displaystyle \sum_{\mathfrak{p}\in \mathcal{P}_{I,K}}1_{\mathfrak{p}|Q({\bf x}')}\log(\mathcal{N}_{K}(\mathfrak{p})) & \leq \log\left(\displaystyle\prod_{\mathfrak{p}|Q({\bf x}')}\mathcal{N}_{K}(\mathfrak{p})\right)\leq \log(\mathcal{N}_{K}(Q({\bf x}')))\label{intermediate}\\ & \lesssim_{K,n,m,\kappa}\eta^{\frac{1}{n}}\log(c(Z))(\log(N))^{\frac{n}{n-\kappa}}\nonumber.
\end{align}
Upon choosing adequately $\eta$ and $\tau$, we will see that \eqref{intermediate} does not happen.

Let again ${\bf x}\in X'$. Let $\mathfrak{p}$ be a prime ideal that contributes to the left hand side in the sum of Proposition \ref{characteristicproposition2}. Then there exists ${\bf z}\in \mathcal{C}$ such that ${\bf x}\equiv {\bf z}\mod (\mathfrak{p})$. Since $Q$ vanishes at $\mathcal{C}$, we have $Q({\bf x})\equiv Q({\bf z})=0\mod (\mathfrak{p})$, so we must have $\mathfrak{p}|Q({\bf x})$. We conclude that every prime ideal $\mathfrak{p}$ that contributes to the left hand side in the sum of Proposition \ref{characteristicproposition2} also contributes to the left hand side of \eqref{intermediate}. Then from Proposition \ref{characteristicproposition2} we have that the left side of \eqref{intermediate} is at least $\gtrsim_{K}|I|\gtrsim_{K}\tau(\log(N))^{\frac{n}{n-\kappa}}$. Choose $\eta$ and $\tau$ to satisfy
\begin{center}
$\tau\geq C_{2}\log(c(Z))\eta^{\frac{1}{n}}$
\end{center}
for a constant $C_{2}$ large enough, dependent of $K,n,m$, and $\kappa$. Since by Proposition \ref{characteristicproposition2} we also required $\eta\geq C_{1}\alpha\tau^{\kappa}$, it is enough to take
\begin{center}
$\eta\geq \left(C_{1}\alpha C_{2}^{\kappa}(\log(c(Z)))^{\kappa}\right)^{\frac{n}{n-\kappa}}\gtrsim_{K,n,\kappa,m} \alpha^{\frac{n}{n-\kappa}}(\log(c(Z)))^{\frac{\kappa n}{n-\kappa}}.$
\end{center} 
Then $Q({\bf x}')=Q({\bf x})=0$. Since this holds for all ${\bf x}\in X'$, we conclude the proof. 
\end{proof}

\begin{remark}
The constant $c(Z)$ can be made effective if we use an effective version of Noether's normalization. For instance, using \cite[Theorem 4.2]{P2} we may take $c(Z)\lesssim_{K,m}\deg(Z)^{m-\dim(Z)}$, where $\deg(Z)$ is the degree of $Z$.
\label{remark-1}
\end{remark}

\begin{remark}
In the proof of Theorem \ref{affinegeneralization} we found a polynomial $Q$ of complexity at most $\lesssim_{K,n,m,\kappa,\varepsilon}\eta^{\frac{1}{n}}\log(c(Z))(\log(N))^{\frac{\kappa}{n-\kappa}}$ in $Z(K,N)$. Since we chose $\eta\gtrsim_{K,n,\kappa,m}\alpha^{\frac{n}{n-\kappa}}(\log(c(Z)))^{\frac{\kappa n}{n-\kappa}}$, we may construct the polynomial $Q$ such that its complexity is at most $\lesssim_{K,n,m,\kappa,\varepsilon}\alpha^{\frac{1}{n-\kappa}}\log(c(Z))^{\frac{n}{n-\kappa}}(\log(N))^{\frac{\kappa}{n-\kappa}}$ in $Z(K,N)$.
\label{remark}
\end{remark}

We conclude this section by proving that Theorem \ref{affinegeneralization} implies a stronger theorem than Theorem \ref{generalization}.

\begin{corollary}
For all $n>0$, all real $0\leq \kappa<n$ and all global field $K$, there exists $\tau=\tau(n,\kappa,K)\geq 1$ such that the following holds. Denote $I$ for the interval $[\tau(\log(N))^{\frac{n}{n-\kappa}},2\tau(\log(N))^{\frac{n}{n-\kappa}}]$. Write $\mathcal{P}_{I,K}$ for the set of prime ideals $\mathfrak{p}\subseteq \mathcal{O}_{K}$ defined as
\begin{center}
$\mathcal{P}_{I,K}:=\begin{cases}\left\{\mathfrak{p}\subseteq \mathcal{O}_{K}:\mathcal{N}_{K}(\mathfrak{p})\in I\right\}\;\text{if}\;K\;\text{is a number field},\\ \left\{ \mathfrak{p}\subseteq \mathcal{O}_{K}:\mathcal{N}_{K}(\mathfrak{p})= \tau(\log(N))^{\frac{n}{n-\kappa}}\right\}\;\text{if}\;K\;\text{is a function field}.\end{cases}$
\end{center}
For every $X\subseteq [N]_{K}^{n}$, consider the embedding $X\hookrightarrow \mathbb{P}^{n}(K)$ given by ${\bf x}\mapsto (1:{\bf x})$. Denote $\overline{X}$ the image of $X$ by this embedding. If $|\overline{X}_{\mathfrak{p}}|\lesssim \mathcal{N}_{K}(\mathfrak{p})^{\kappa}$ for every prime $\mathfrak{p}\in \mathcal{P}_{I,K}$, then, for every $\varepsilon>0$, there exists some non-zero $P\in \mathcal{O}_{K}[X_{1},\ldots ,X_{n}]$, of complexity $\lesssim_{\kappa,n,\varepsilon,K}(\log(N))^{\frac{\kappa}{n-\kappa}}$ in $[N]_{\mathcal{O}_{K}}^{n}$ vanishing on at least $(1-\varepsilon)|X|$ points of $X$.  
\label{generalization2}
\end{corollary}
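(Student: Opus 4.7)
The plan is to reduce to Theorem \ref{affinegeneralization} by taking $Z=\mathbb{P}^{n}(\overline{k})$ and applying it to $\tilde{X}\subseteq\mathbb{P}^{n}(K)$, then to dehomogenize the resulting polynomial back to an affine polynomial on $X$.

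First I would bound the projective heights of $\tilde{X}$. By \eqref{inequality of heights}, for each $\mathbf{x}=(x_{1},\ldots,x_{n})\in X\subseteq [N]_{K}^{n}$ we have $H(1:x_{1}:\ldots:x_{n})\leq \max_{i}\{H(x_{i})\}^{[K:k]}\leq N^{d}$ where $d=[K:k]$. Hence $\tilde{X}\subseteq Z(K,N^{d})$ for $Z=\mathbb{P}^{n}(\overline{k})$. The embedding $\mathbf{x}\mapsto(1:\mathbf{x})$ is clearly injective, so $|\tilde{X}|=|X|$.

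Second, I would apply Theorem \ref{affinegeneralization} to $Z=\mathbb{P}^{n}(\overline{k})$ (so $m=n=\dim Z$) with height parameter $N^{d}$ in place of $N$. The hypothesis $|\tilde{X}_{\mathfrak{p}}|\ll\mathcal{N}_{K}(\mathfrak{p})^{\kappa}$ is exactly the one assumed; the prime range is only shifted by the constant factor $d$ coming from $\log(N^{d})=d\log(N)$, which can be absorbed into the implicit constant $\tau(n,\kappa,K)$. For the ambient variety $Z=\mathbb{P}^{n}$ the change of variables in the definition of complexity can be taken trivial ($Y_{i}=T_{i}$). The conclusion of Theorem \ref{affinegeneralization} then furnishes a non-zero homogeneous polynomial $P\in\mathcal{O}_{K}[T_{0},\ldots,T_{n}]$, not identically zero on $Z$, of degree $D\ll_{\kappa,n,\varepsilon,K}(\log N)^{\kappa/(n-\kappa)}$ and with coefficient affine heights bounded by $N^{dD}$, vanishing on at least $(1-\varepsilon)|\tilde{X}|$ points of $\tilde{X}$.

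Third, I would dehomogenize by setting $Q(X_{1},\ldots,X_{n}):=P(1,X_{1},\ldots,X_{n})\in\mathcal{O}_{K}[X_{1},\ldots,X_{n}]$. To guarantee $Q\neq 0$, first strip the maximal power of $T_{0}$ dividing $P$: if $P=T_{0}^{k}P_{1}$ with $T_{0}\nmid P_{1}$, replace $P$ by $P_{1}$. This substitution does not change the vanishing at any point of $\tilde{X}$, since every point there has first coordinate equal to $1$, and it can only decrease degree and coefficient heights. Then $Q=P_{1}(1,X_{1},\ldots,X_{n})$ is non-zero, has degree at most $D$, and its coefficients form a subset of those of $P_{1}$, hence have affine height at most $N^{dD}$. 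This gives complexity $\ll_{\kappa,n,\varepsilon,K}(\log N)^{\kappa/(n-\kappa)}$, and $Q$ vanishes precisely at those $\mathbf{x}\in X$ for which $P(1,\mathbf{x})=0$; via the bijection $X\to\tilde{X}$ this accounts for at least $(1-\varepsilon)|X|$ points. The only mildly delicate step is the dehomogenization: one must strip the maximal power of $T_{0}$ from $P$ to ensure $Q$ is genuinely non-zero, while observing that this operation preserves vanishing on all of $\tilde{X}$ because every point there has $T_{0}$-coordinate $1$. Everything else is bookkeeping to propagate the complexity bound through the rescaling $N\mapsto N^{d}$.
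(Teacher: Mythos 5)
Your proposal is correct and follows essentially the same route as the paper: embed $X$ into $[N^{[K:k]}]_{\mathbb{P}^{n}(K)}$ via $\mathbf{x}\mapsto(1:\mathbf{x})$, apply Theorem \ref{affinegeneralization} with $Z=\mathbb{P}^{n}$, then dehomogenize by $P\mapsto P(1,X_{1},\ldots,X_{n})$. The one place you add something the paper glosses over — stripping the maximal power of $T_{0}$ from $P$ to guarantee the dehomogenization is non-zero — is in fact unnecessary: for a \emph{homogeneous} $P$ of degree $D$, the map $P\mapsto P(1,X_{1},\ldots,X_{n})$ is injective, since the exponent of $T_{0}$ in each monomial is recovered as $D$ minus the total degree in $X_{1},\ldots,X_{n}$; so $P\neq 0$ already implies $P(1,X_{1},\ldots,X_{n})\neq 0$. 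Your extra step is harmless and the rest of the bookkeeping (the rescaling $N\mapsto N^{d}$ absorbed into $\tau$, the complexity estimate) is handled correctly.
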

\begin{proof}
Let $X\subseteq [N]_{K}^{n}$ be set as in Theorem \ref{generalization}. Let ${\bf x}\in [N]_{K}^{n}$. Then \eqref{inequality of heights0} implies that $H(1:{\bf x})\leq N^{n}$. Consider the embedding $[N]_{K}^{n}\rightarrow [N^{n}]_{\mathbb{P}^{n}(K)}$ given by ${\bf x}\mapsto (1:{\bf x})$. Then $\overline{X}$ satisfies the hypothesis of Theorem \ref{affinegeneralization}, so we can apply this theorem to $\overline{X}$. We obtain a non-zero homogeneous polynomial $P(X_{0},\ldots ,X_{n})$ of the desired complexity, that vanishes on at least $(1-\varepsilon)|\overline{X}|$ points of $\overline{X}$. Now $P(1,X_{1},\ldots ,X_{n})$ is a polynomial that satisfies the conclusion of Corollary \ref{generalization2}. 
\end{proof}

\section{Counting points in transcendental surfaces}
\label{4}

Having proved Theorem \ref{affinegeneralization}, we are going to prove a more general version of Theorem \ref{app}. For the \mbox{corresponding} definitions on o-minimality, see \cite{Dries}. If $X\subseteq \mathbb{R}^{n}$ and $K$ is a number field, let $X(K)$ denote the subset of points with coordinates in the field $K$. For $N\geq 1$, let
\begin{center}
$X(K,N):=\{{\bf x}=(x_{1},\ldots ,x_{n})\in X(K):H(x_{i})\leq N\;\forall i\}$.
\end{center}
Let $X\subseteq \mathbb{R}^{n}$ be a set definable in an o-minimal structure, and let $X^{\text{alg}}$ be the set of points ${\bf x}\in X$ such that there exists a connected, semialgebraic set $Y\subseteq X$ of positive dimension with ${\bf x}\in Y$. We define $X^{\text{trans}}:=X\backslash X^{\text{alg}}$. 

As an attempt to prove Conjecture \ref{wilkie} and its generalization \eqref{wilkiepf}, we pose the following conjectures. In what follows, $\tilde{\mathbb{R}}=(\mathbb{R},<,+,\cdot ,f_{1},\ldots ,f_{r})$ is a model complete expansion of the real field by a Pfaffian chain $f_{1},\ldots ,f_{r}$. Also, if $X\subseteq \mathbb{R}^{n}$, we will denote $\overline{X}$ for the image of $X$ in $\mathbb{P}^{n}(\mathbb{R})$ by the embedding ${\bf x}\mapsto (1:{\bf x})$.

\begin{conjecture}[Ill-distribution Conjecture A] 
Suppose that $X\subseteq \mathbb{R}^{n}$ is a set definable in the o-minimal structure $\tilde{\mathbb{R}}$, of dimension at most $n-1$. Then there are positive constants $\alpha=\alpha(X,K)$, $\tau=\tau(X,K)$, $\kappa=\kappa(X)$, with $0\leq \kappa<n$ such that
\begin{equation}
|\left(\overline{X(K,N)}\right)_{\mathfrak{p}}|\leq \alpha\mathcal{N}_{K}(\mathfrak{p})^{\kappa}
\end{equation}
for all $N>e$ and all prime ideals $\mathfrak{p}$ with $\mathcal{N}_{K}(\mathfrak{p})\geq \tau(\log(N))^{\frac{n}{n-\kappa}}$.
\label{wilkieill}
\end{conjecture}
  
\begin{conjecture}[Ill-distribution Conjecture B]
Suppose that $X\subseteq \mathbb{R}^{n}$ is a set definable in the o-minimal structure $\tilde{\mathbb{R}}$. Then there are positive constants $\alpha=\alpha_{1}(X,K)$, $\tau=\tau(X,K)$, $\kappa=\kappa(X)$, with $0\leq \kappa<n$ such that
\begin{equation}
|\left(\overline{X^{{\rm trans}}(K,N)}\right)_{\mathfrak{p}}|\leq \alpha\mathcal{N}_{K}(\mathfrak{p})^{\kappa}
\end{equation}
for all $N>e$ and all prime ideals $\mathfrak{p}$ with $\mathcal{N}_{K}(\mathfrak{p})\geq \tau(\log(N))^{\frac{n}{n-\kappa}}$.
\label{wilkieill2}
\end{conjecture}  

We note that the condition in the dimension in Conjecture \ref{wilkieill} is because a definable subset $X\subseteq \mathbb{R}^{n}$ of dimension $n$ contains an open disk, thus in this case we have $|(\overline{X(K,N)})_{\mathfrak{p}}|\gtrsim \mathcal{N}_{K}(\mathfrak{p})^{n}$ for all prime ideals $\mathfrak{p}$ of absolute norm large enough.  

It is clear that Conjecture \ref{wilkieill} implies Conjecture \ref{wilkieill2}. Also, note that in general, we can not expect Conjecture \ref{wilkieill} and Conjecture \ref{wilkieill2} to hold for all primes $\mathfrak{p}$. Indeed, take $f(x)=2^{x}$ and consider $X$ the graph of $f$. Then if $K=\mathbb{Q}$ and $p\lesssim \log\log(N)$ is a prime, $|(X(\mathbb{Z},N))_{p}|\sim pu_{p}$, where $u_{p}$ is the order of $2$ in $(\mathbb{Z}/p\mathbb{Z})^{\times}$. Since it is expected that $u_{p}=p-1$ for many primes, we expect $|(X^{\text{trans}}(\mathbb{Z},N))_{p}|=|(X(\mathbb{Z},N))_{p}|\sim p^{2}$ for many primes $p\lesssim \log\log(N)$. 

Now we are going to prove that Conjecture \ref{wilkieill2} is equivalent to Conjecture \ref{wilkiepf} for sets of dimension at most $2$.

\begin{theorem}
Let $X\subseteq \mathbb{R}^{n}$ be a set definable in $\tilde{\mathbb{R}}$ of dimension at most $2$. Then $X$ verifies Conjecture \ref{wilkieill2} if and only if $X$ verifies Conjecture \ref{wilkiepf}.
\label{evidence}
\end{theorem}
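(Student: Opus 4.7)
The plan is to prove the equivalence by handling each implication separately.

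The easy direction (Conjecture \ref{wilkiepf} $\Rightarrow$ Conjecture \ref{wilkieill2}) is a trivial counting estimate. Assuming $|X^{\text{trans}}(K,N)| \leq c_{1}(\log N)^{c_{2}}$, the reduction map $\pi_{\mathfrak{p}}$ cannot increase cardinality, so the same bound holds for $|(\widetilde{X^{\text{trans}}(K,N)})_{\mathfrak{p}}|$. Setting $\kappa = c_{2}n/(n+c_{2}) \in [0,n)$ makes $n\kappa/(n-\kappa)=c_{2}$, so any prime $\mathfrak{p}$ with $\mathcal{N}_{K}(\mathfrak{p}) \geq \tau(\log N)^{n/(n-\kappa)}$ satisfies $\mathcal{N}_{K}(\mathfrak{p})^{\kappa} \geq \tau^{\kappa}(\log N)^{c_{2}}$, yielding Conjecture \ref{wilkieill2} with $\alpha=c_{1}/\tau^{\kappa}$.

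For the converse (Conjecture \ref{wilkieill2} $\Rightarrow$ Conjecture \ref{wilkiepf}), assume the ill-distribution hypothesis. The embedding $\textbf{x}\mapsto (1:\textbf{x})$ sends $X^{\text{trans}}(K,N)$ into $[N^{[K:k]}]_{\mathbb{P}^{n}(K)}$ by \eqref{inequality of heights}, and by hypothesis $|(\widetilde{X^{\text{trans}}(K,N)})_{\mathfrak{p}}| \leq \alpha\mathcal{N}_{K}(\mathfrak{p})^{\kappa}$ once $\mathcal{N}_{K}(\mathfrak{p})$ is large enough. After enlarging $\tau$ to simultaneously meet the thresholds of Conjecture \ref{wilkieill2} and Theorem \ref{affinegeneralization} (permissible by Remark \ref{remark}), the latter theorem applied with $Z=\mathbb{P}^{n}$ and $\varepsilon=1/2$ produces a non-zero homogeneous polynomial $P\in \mathcal{O}_{K}[T_{0},\ldots,T_{n}]$ of complexity $\ll_{X,K}(\log N)^{\kappa/(n-\kappa)}$ that vanishes on $\geq \tfrac{1}{2}|X^{\text{trans}}(K,N)|$ of the embedded points. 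Dehomogenising yields $\tilde P\in \mathcal{O}_{K}[T_{1},\ldots,T_{n}]$ with the same degree and coefficient bounds.

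The key observation for reducing dimension is that $X^{\text{trans}}(K,N)\cap V(\tilde P) \subseteq (X\cap V(\tilde P))^{\text{trans}}(K,N)$, since a point of $X^{\text{trans}}$ lying on a positive-dimensional connected semialgebraic subset of $X\cap V(\tilde P)$ would automatically be in $X^{\text{alg}}$. Using $\dim X\leq 2$: if $X\cap V(\tilde P)$ had a $2$-dimensional part at some $\textbf{y}$, then $X$ would be locally contained in $V(\tilde P)$ near $\textbf{y}$; the Zariski closure of this local piece would be a $2$-dimensional algebraic variety $W\subseteq V(\tilde P)$ containing a neighborhood of $\textbf{y}$ in $X$, forcing $W\subseteq X$ locally, hence $\textbf{y}\in (X\cap V(\tilde P))^{\text{alg}}$. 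Thus $(X\cap V(\tilde P))^{\text{trans}}$ lives in a definable set of dimension at most $1$. Applying the one-dimensional case of Conjecture \ref{wilkiepf} (Butler \cite{Butler}, Jones--Thomas \cite{Jones}) to this slice, and combining with the $1/2$-proportion estimate, produces the bound $|X^{\text{trans}}(K,N)|\ll_{X,K}(\log N)^{c_{2}(X)}$.

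The main obstacle is making the last step quantitatively precise: one needs the constant appearing in the one-dimensional Pfaffian estimate to depend \emph{polynomially} on the complexity of the cutting polynomial $\tilde P$, since that complexity grows like $(\log N)^{\kappa/(n-\kappa)}$. Extracting this uniformity requires tracking through the mild parametrisation machinery and the complexity bounds for intersections of Pfaffian sets from \cite{Butler,Jones}; it is implicit there but must be explicitly verified in order to close the argument.
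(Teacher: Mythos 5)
Your overall skeleton is right, and the easy direction is fine (your choice $\kappa = c_2 n/(n+c_2)$ works just as well as the paper's choice $\kappa = n(c_2-1)/c_2$, with the mild advantage of not requiring $c_2 \geq n/(n-1)$). But the closing step of the hard direction diverges from the paper in two ways, one of which introduces a flaw and one of which is a gap you already flagged.

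First, your dimension-reduction step is shaky. You claim that a $2$-dimensional component of $X\cap V(\tilde P)$ at $\textbf{y}$ would force $\textbf{y}\in(X\cap V(\tilde P))^{\mathrm{alg}}$ via a Zariski-closure argument, but the closure $W$ of the local piece of $X$ being a $2$-dimensional variety does not give $W\subseteq X$ locally: $W$ contains the local piece of $X$, not the other way round, and when $\dim V(\tilde P) > 2$ the definable $2$-dimensional piece of $X$ sitting inside $V(\tilde P)$ need not be semialgebraic. The paper does not attempt any such reduction to the $1$-dimensional case; instead it directly bounds $|(X\cap V(Q))^{\mathrm{trans}}(K,N)|$ in both the dimension-$1$ and dimension-$2$ cases.

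Second, the gap you honestly identify — needing the constant in the Pfaffian estimate to depend polynomially on $\deg(\tilde P)$ — is precisely where the paper invokes specific results you should have used. The paper first reduces to $X\subseteq\mathbb{R}^i$, $\dim X = i-1$, $i\in\{2,3\}$, via the reduction in the proof of Jones--Thomas Theorem 5.4 and the projection argument from Butler, and then cites Jones--Thomas Lemma 3.3 ($i=2$) and Proposition 5.3 ($i=3$), which already give
\begin{equation*}
\left|(X\cap V(P))^{\mathrm{trans}}(K,N)\right| \leq c_1(X,K)\,d^{\,c_2(X)}\,(\log N)^{c_3(X)}
\end{equation*}
uniformly in the degree $d$ of $P$. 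Substituting $d\ll(\log N)^{\kappa/(n-\kappa)}$ then closes the argument directly, with no detour through the $1$-dimensional conjecture. So the missing ingredient in your proposal is exactly these two lemmas of Jones--Thomas; once cited, the proof closes without the dimension-reduction step.
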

\begin{proof}
Note that if $n=1$, both conjectures are trivial. So suppose that $n>1$. We begin by proving that Conjecture \ref{wilkiepf} implies Conjecture \ref{wilkieill2} without restriction in the dimension of $X$.

 Suppose that Conjecture \ref{wilkiepf} holds for some set $X\subseteq \mathbb{R}^{n}$ with $n>1$ definable in $\tilde{\mathbb{R}}$. We may suppose that $c_{2}\geq \frac{n}{n-1}$. Take $\kappa$ so that $\frac{n}{n-\kappa}=c_{2}$ and $\kappa\geq 1$. Then if $\mathfrak{p}$ is a prime in $\mathcal{O}_{K}$ such that $\mathcal{N}_{K}(\mathfrak{p})\geq c_{1}\log(N)^{\frac{n}{n-\kappa}}=c_{1}\log(N)^{c_{2}}$, we have that 
\begin{center}
$|\left(\overline{X^{\text{trans}}(K,N)}\right)_{\mathfrak{p}}|\leq |X^{\text{trans}}(K,N)|\leq c_{1}(\log(N))^{c_{2}}\leq \mathcal{N}_{K}(\mathfrak{p})\leq \mathcal{N}_{K}(\mathfrak{p})^{\kappa}$. 
\end{center}
Taking $\alpha=1$, $\kappa=\frac{c_{2}-1}{c_{2}}n$ and $\tau\geq c_{1}$, we conclude that $X$ satisfies Conjecture \ref{wilkieill2}.

For the other implication, let us suppose that $X\subseteq \mathbb{R}^{i}$ is a set of dimension $i-1$, where $i=2,3$, satisfying Conjecture \ref{wilkieill2}. It is showed in the proof of \cite[Theorem 5.4]{Jones} that it is enough to suppose that $X$ is the graph of an implicitly definable function defined on an open cell in $\mathbb{R}^{i-1}$ (for the definition of an implicitly definable function, see \cite[Page 645]{Jones}). For such $X$,  we can apply Corollary \ref{generalization2} with $\varepsilon=\frac{1}{2}$ to $X^{\text{trans}}(K,N)$, to find a non-zero polynomial $Q\in \mathcal{O}_{K}[T_{j}]_{1\leq j\leq i}$ of degree at most $\lesssim_{\kappa,K,n}(\log(N))^{\frac{\kappa}{n-\kappa}}$ vanishing on at least half the points of $X^{\text{trans}}(K,N)$. This means that
\begin{equation}
\left |X^{\text{trans}}(K,N)\cap V(Q)\right|\geq \dfrac{1}{2}\left| X^{\text{trans}}(K,N) \right|.
\label{bound1}
\end{equation}
Now, notice that
\begin{equation}
X^{\text{trans}}(K,N)\cap V(Q)\subseteq (X\cap V(Q))^{\text{trans}}(K,N).
\label{bound2}
\end{equation}
By \cite[Lemma 3.3]{Jones} if $i=2$, and \cite[Proposition 5.3]{Jones} if $i=3$, we have for any non-zero polynomial $P\in \mathbb{R}[X_{j}]_{1\leq j\leq i}$ of degree $d$ the bound
\begin{equation}
\left|(X\cap V(P))^{\text{trans}}(K,N)\right|\leq c_{1}(X,K)d^{c_{2}(X)}(\log(N))^{c_{3}(X)},
\label{bound3}
\end{equation}
where $c_{1}(X,K),c_{2}(X),c_{3}(X)$ are positive effective constants. Applying \eqref{bound3} for the polynomial $Q$ we constructed, and using \eqref{bound1} and \eqref{bound2} we conclude that X satisfies Conjecture \ref{wilkiepf}. The general case follows by an argument with projections, as the one explained in \cite[Page 644]{Butler}.
\end{proof}

\begin{remark}
We already noted that Conjecture \ref{wilkieill} implies Conjecture \ref{wilkieill2}. Hence, the proof of Theorem \ref{evidence} also shows that Conjecture \ref{wilkieill} for subsets $X\subseteq \mathbb{R}^{n}$ definable in $\tilde{\mathbb{R}}$ of dimension at most $2$ implies Conjecture \ref{wilkiepf} for such subsets. 
\end{remark}

We do not know if Conjecture \ref{wilkieill} implies Conjecture \ref{wilkiepf}. Similarly, in general it is still unknown if the existence of mild parametrizations for sets definable in $\tilde{\mathbb{R}}$ implies Conjecture \ref{wilkiepf}. However, in \cite[Conjecture 3.5]{Pila4} Pila proves that, given $X\subseteq \mathbb{R}^{n}$ a subset definable in $\tilde{\mathbb{R}}$,  the existence of ``uniform'' mild parametrizations for the family of sets $\{X\cap V\}_{V\in \mathcal{F}}$, with $\mathcal{F}$ an algebraic family of algebraic varieties, implies Conjecture \ref{wilkiepf}. Motivated by this conjecture of Pila, we pose the following conjecture:

\begin{conjecture}
Suppose that $X\subseteq \mathbb{R}^{n}$ is a subset definable in the o-minimal structure $\tilde{\mathbb{R}}$. Let $d>0$ and $r>0$ be positive integers. There are positive constants $\alpha=\alpha(X,K)$, $c_{1}=c_{1}(X,K)$, $c_{2}=c_{2}(X,K)$, $\tau=\tau(X,K)$, $\kappa=\kappa(X,r)$ with $0\leq \kappa<r$, such that for any algebraic variety $V$  defined over $\mathbb{R}$ of dimension $r$ and degree bounded by $d$, we have
\begin{equation}
\left\vert\left( \overline{(X\cap V)^{\text{trans}}(K,N)}\right)_{\mathfrak{p}}\right\vert\leq \alpha d^{c_{1}}\mathcal{N}_{K}(\mathfrak{p})^{\kappa}
\end{equation}
for all $N>e$ and all prime ideals $\mathfrak{p}$ with $\mathcal{N}_{K}(\mathfrak{p})\geq \tau d^{c_{2}}(\log(N))^{\frac{n}{n-\kappa}}$.
\label{wilkieill3}
\end{conjecture}

We believe that Conjecture \ref{wilkieill3} implies Conjecture \ref{wilkiepf}, but we do not have a proof. It is most likely that, given the inductive nature of Conjecture \ref{wilkiepf}, in order to obtain Conjecture \ref{wilkiepf} from Conjecture \ref{wilkieill3} one requires a stronger version for Theorem \ref{affinegeneralization}, where the ambient variety $Z$ is not required to contain the set $X$ and the bound in the polynomial that we find gets better as the degree of the ambient variety $Z$ gets larger (this is known to be possible in other versions of the polynomial method; for instance, see \cite{Walsh4}).

\subsection*{Acknowledgements}  The author is very grateful to his advisor Rom\'an Sasyk for a careful reading of the manuscript and several helpful discussions. The author would also like to thank Juan Menconi for his useful comments. The author is very grateful to the referee for his/her comments and for carefully reading the article. This work was partially supported by a CONICET doctoral fellowship. 

\normalsize


\begin{thebibliography}{[HD82]}




\normalsize
\baselineskip=17pt



\bibitem[BN17]{Binyamini} G. Binyamini and D. Novikov,
\emph{Wilkie's conjecture for restricted elementary functions},
Ann. of Math. (2), 186(1):237--275, 2017.

\bibitem[BG06]{Bombieri} E. Bombieri and W. Gubler,
\emph{Heights in Diophantine geometry},
volume 4 of New Mathematical Monographs. 
Cambridge University Press, Cambridge, 2006.

\bibitem[BV83]{Bombieri2} E. Bombieri and J. Vaaler,
\emph{On Siegel's lemma},
Invent. Math., 73(1):11--32, 1983.

\bibitem[But12]{Butler} L. A. Butler,
\emph{Some cases of {W}ilkie's conjecture},
Bull. Lond. Math. Soc., 44(4):642--660, 2012.

\bibitem[But17]{Butler2} L. A. Butler,
\emph{A Diophantine approach to the three and four exponentials conjectures},
Ramanujan J., 42(1):199--221, 2017.

\bibitem[CPW16]{CluckersPila} R. Cluckers, J. Pila and A. Wilkie,
\emph{Uniform parameterization of subanalytic sets and diophantine
  applications},
 Ann. Sci. Ecole Norm. Sup. 53, 1-42, 2020.

\bibitem[FJ08]{Fried} M. D. Fried and M. Jarden,
\emph{Field Arithmetic},
volume 11 of  Ergebnisse der Mathematik
  und ihrer Grenzgebiete. 3. Folge. A Series of Modern Surveys in Mathematics
  [Results in Mathematics and Related Areas. 3rd Series. A Series of Modern
  Surveys in Mathematics]. Springer-Verlag, Berlin, third edition, 2008.

\bibitem[HS00]{Hindry} M. Hindry and J. Silverman,
\emph{Diophantine geometry-An introduction},
volume 201 of Graduate Texts in
  Mathematics.
 Springer-Verlag, New York, 2000.

\bibitem[HV09]{Helfgott} H. A. Helfgott and A. Venkatesh,
emph{How small must ill-distributed sets be?},
in Analytic number theory, pages 224--234. Cambridge Univ.
  Press, Cambridge, 2009.

\bibitem[JT12]{Jones} G. O. Jones and M. E. M. Thomas,
\emph{The density of algebraic points on certain Pfaffian surfaces},
 Q. J. Math., 63(3):637--651, 2012.

\bibitem[JW15]{JonesWilkie} G. O. Jones and A. J. Wilkie,
\emph{O-minimality and diophantine geometry},
volume 421 of 
London Mathematical Society Lecture Note Series.
Cambridge University Press, Cambridge, 2015.
Lecture notes from the LMS-EPSRC course held at the University of
  Manchester, Manchester, 2013.

\bibitem[Lan83]{Lang} S. Lang,
\emph{Fundamentals of Diophantine geometry},
Springer-Verlag, New York, 1983.

\bibitem[MPS19]{P2} J. M. Menconi, M. Paredes and R. Sasyk,
\emph{The inverse sieve problem for algebraic varieties over global
  fields},
Revista Matem\'atica Iberoamericana, https://doi.org/10.4171/rmi/1261, 2020.    

\bibitem[Pil91]{Pila5} J. Pila,
\emph{Geometric postulation of a smooth function and the number of rational
  points},
Duke Math. J., 63(2):449--463, 1991.    

\bibitem[Pil09]{Pila} J. Pila,
\emph{On the algebraic points of a definable set},
Selecta Math. (N.S.), 15(1):151--170, 2009.

\bibitem[Pil10]{Pila4} J. Pila,
\emph{Counting rational points on a certain exponential-algebraic surface},
Ann. Inst. Fourier (Grenoble), 60(2):489--514, 2010.

\bibitem[PW06]{Pila2} J. Pila and A. J. Wilkie,
\emph{The rational points of a definable set},
Duke Math. J., 133(3):591--616, 2006.

\bibitem[Ros02]{Rosen} M. Rosen,
\emph{Number Theory in function fields},
volume 210 of Graduate Texts in Mathematics.
Springer-Verlag, New York, 2002.

\bibitem[Sch79]{Schanuel} S. H. Schanuel,
\emph{Heights in number fields},
Bull. Soc. Math. France, 107(4):433--449, 1979.

\bibitem[Ser89]{Serre} J. P. Serre,
\emph{Lectures on the Mordell-Weil theorem},
Aspects of Mathematics, E15. Friedr. Vieweg \& Sohn, Braunschweig,
  1989. Translated from the French and edited by Martin Brown from notes by
  Michel Waldschmidt.

\bibitem[Sti09]{Stick} H. Stichtenoth,
\emph{Algebraic function fields and codes},
 volume 254 of 
  Graduate Texts in Mathematics.
Springer-Verlag, Berlin, second edition, 2009.

\bibitem[vdS98]{Dries} L. van den Dries,
\emph{Tame topology and o-minimal structures},
volume 248 of 
London Mathematical Society Lecture Note Series.
Cambridge University Press, Cambridge, 1998.

\bibitem[Wal12]{Walsh2} M. N. Walsh,
\emph{The inverse sieve problem in high dimensions},
 Duke Math. J., 161(10):2001--2022, 2012.

\bibitem[Wal14]{Walsh} M. N. Walsh,
\emph{The algebraicity of ill-distributed sets},
Geom. Funct. Anal., 24(3):959--967, 2014.

\bibitem[Wal2020]{Walsh4} M. N. Walsh,
\emph{The polynomial method over varieties}m
Inventh. Math. 222, no. 2, 469--512, 2020

\bibitem[Wan92]{Wan} D. Q. Wan,
\emph{Heights and zeta functions in function fields},
In The arithmetic of function fields ({C}olumbus, {OH}, 1991),
  volume~2 of Ohio State Univ. Math. Res. Inst. Publ., pages 455--463. de
  Gruyter, Berlin, 1992.

\end{thebibliography}
\end{document}